\def\@settitle{%
  \vspace*{-20pt}
  \begin{flushleft}%
    \baselineskip14\p@\relax
    \normalfont\bfseries\LARGE
    \@title
  \end{flushleft}%
}
\def\@setauthors{%
  \begingroup
  \def\thanks{\protect\thanks@warning}%
  \trivlist
  \large \@topsep30\p@\relax
  \advance\@topsep by -\baselineskip
  \item\relax
  \author@andify\authors
  \def\\{\protect\linebreak}%
  \authors
  \ifx\@empty\contribs
  \else
    ,\penalty-3 \space \@setcontribs
    \@closetoccontribs
  \fi
  \normalfont
  \@setaddresses
  \endtrivlist
  \endgroup
}
\def\@setaddresses{\par
  \nobreak \begingroup\raggedright
  \small
  \def\author##1{\nobreak\addvspace\smallskipamount}%
  \def\\{\unskip, \ignorespaces}%
  \interlinepenalty\@M
  \def\address##1##2{\begingroup
    \par\addvspace\bigskipamount\noindent
    \@ifnotempty{##1}{(\ignorespaces##1\unskip) }%
    {\ignorespaces##2}\par\endgroup}%
  \def\curraddr##1##2{\begingroup
    \@ifnotempty{##2}{\nobreak\noindent\curraddrname
      \@ifnotempty{##1}{, \ignorespaces##1\unskip}\/:\space
      ##2\par}\endgroup}%
  \def\email##1##2{\begingroup
    \@ifnotempty{##2}{\smallskip\nobreak\noindent E-mail address%
      \@ifnotempty{##1}{, \ignorespaces##1\unskip}\/:\space
      \ttfamily##2\par}\endgroup}%
  \def\urladdr##1##2{\begingroup
    \def~{\char`\~}%
    \@ifnotempty{##2}{\nobreak\noindent\urladdrname
      \@ifnotempty{##1}{, \ignorespaces##1\unskip}\/:\space
      \ttfamily##2\par}\endgroup}%
  \addresses
  \endgroup
  \global\let\addresses=\@empty
}
\def\@setabstracta{%
    \ifvoid\abstractbox
  \else
    \skip@25\p@ \advance\skip@-\lastskip
    \advance\skip@-\baselineskip \vskip\skip@
    \box\abstractbox
    \prevdepth\z@ 
    \vskip-10pt
  \fi
}
\renewenvironment{abstract}{%
  \ifx\maketitle\relax
    \ClassWarning{\@classname}{Abstract should precede
      \protect\maketitle\space in AMS document classes; reported}%
  \fi
  \global\setbox\abstractbox=\vtop \bgroup
    \normalfont\small
    \list{}{\labelwidth\z@
      \leftmargin0pc \rightmargin\leftmargin
      \listparindent\normalparindent \itemindent\z@
      \parsep\z@ \@plus\p@
      
    }%
    \item[\hskip\labelsep\bfseries\abstractname.]%
}{%
  \endlist\egroup
  \ifx\@setabstract\relax \@setabstracta \fi
}
\def\section{\@startsection{section}{1}%
  \z@{-1.2\linespacing\@plus-.5\linespacing}{.8\linespacing}%
  {\normalfont\bfseries\large}}
\def\subsection{\@startsection{subsection}{2}%
  \z@{-.8\linespacing\@plus-.3\linespacing}{.3\linespacing\@plus.2\linespacing}%
  {\normalfont\bfseries}}
\def\subsubsection{\@startsection{subsubsection}{3}%
  \z@{.7\linespacing\@plus.1\linespacing}{-1.5ex}%
  {\normalfont\itshape}}
\def\@secnumfont{\bfseries}
\def\tilde{\widetilde}
\def\lk{\operatorname{lk}}
\def\+{\oplus}
\theoremstyle{plain}
\newtheorem{theorem}{Theorem}[section]
\newtheorem{proposition}[theorem]{Proposition}
\newtheorem{corollary}[theorem]{Corollary}
\newtheorem{lemma}[theorem]{Lemma}
\theoremstyle{definition}
\newtheorem{example}[theorem]{Example}
\newtheorem{remark}[theorem]{Remark}
\newcommand{\shortxra}[2][]{\ext@arrow 0359\rightarrowfill@{#1}{#2}}
\def\longrightarrowfill@{\arrowfill@\relbar\relbar\longrightarrow}
\newcommand{\longxra}[2][]{\ext@arrow 0359\longrightarrowfill@{#1}{#2}}
\newcommand{\tata}{
\includegraphics[bb= 0 50 163 160, scale=.09]{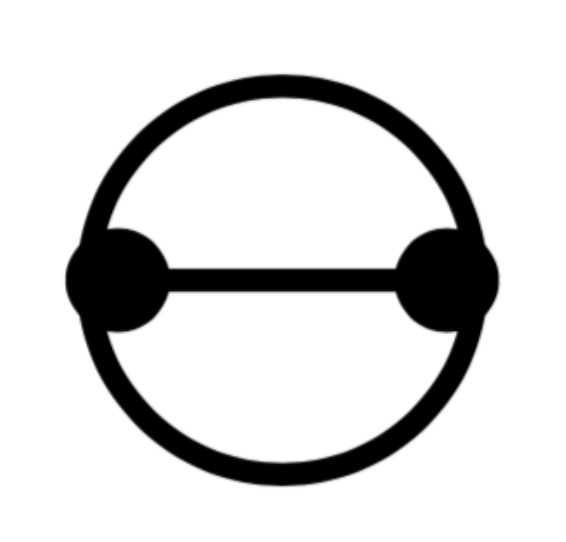}
}
\newcommand{\tetra}{
\includegraphics[bb = 0 60 223 205, scale=.09]{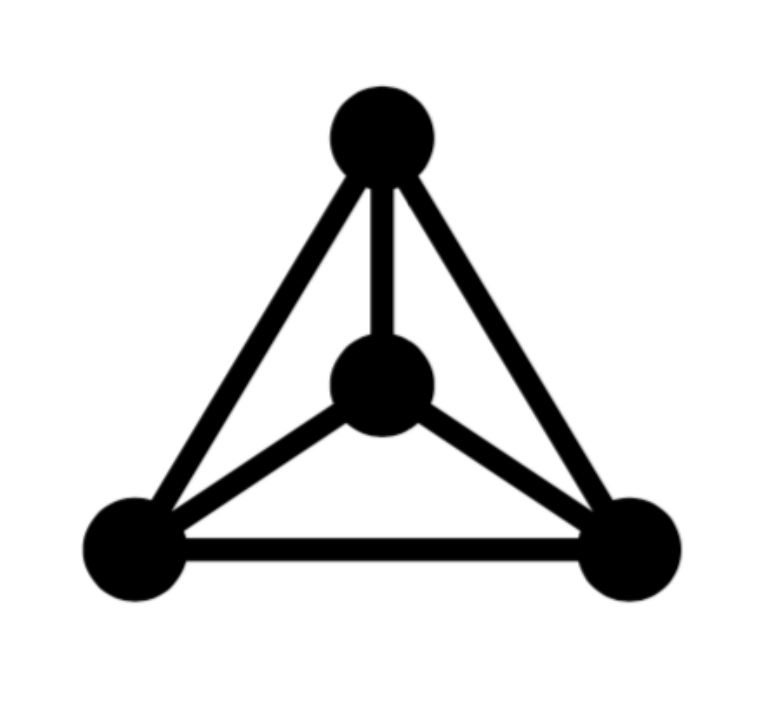}
}
\author{Kazuhiro Ichihara}
\address{Department of Mathematics\\
College of Humanities and Sciences, Nihon University\\
3-25-40 Sakurajosui, Setagaya-ku, Tokyo 156-8550, Japan.}
\email{ichihara@math.chs.nihon-u.ac.jp}
\author{Zhongtao Wu}
\address{Department of Mathematics\\
Lady Shaw Building\\
The Chinese University of Hong Kong\\
Shatin, Hong Kong}
\email{ztwu@math.cuhk.edu.hk}
\keywords{cosmetic surgery, Jones polynomial}
\subjclass{Primary 57M50, Secondary 57M25, 57M27, 57N10}
\begin{document}

\title{A note on Jones polynomial and cosmetic surgery}

\begin{abstract}
We show that two Dehn surgeries on a knot $K$ never yield manifolds that are homeomorphic as oriented manifolds if $V_K''(1)\neq 0$ or $V_K'''(1)\neq 0$.  As an application, we verify the cosmetic surgery conjecture for all knots with no more than $11$ crossings except for three $10$-crossing knots and five $11$-crossing knots.  We also compute the finite type invariant of order $3$ for two-bridge knots and Whitehead doubles, from which we prove several nonexistence results of purely cosmetic surgery.

\end{abstract}
\maketitle

\section{Introduction}

\emph{Dehn surgery} is an operation to modify a three-manifold by drilling and then regluing a solid torus.  Denote by $Y_r(K)$ the resulting three-manifold via Dehn surgery on a knot $K$ in $Y$ along a slope $r$. Two Dehn surgeries along $K$ with distinct slopes $r$ and $r'$ are called \emph{purely cosmetic} if $Y_{r}(K)\cong Y_{r'}(K)$ as oriented manifolds.  In Gordon's 1990 ICM talk \cite[Conjecture~6.1]{Gordon1990} and Kirby's Problem List \cite[Problem~1.81 A]{Kirby}, it is conjectured that two surgeries on inequivalent slopes are never purely cosmetic.  We shall refer to this as the \emph{cosmetic surgery conjecture}.

In the present paper we study purely cosmetic surgeries along knots in the three-sphere $S^3$.  We show that for most knots $K$ in $S^3$,  $S^3_r(K) \ncong S^3_{r'}(K)$ as oriented manifolds for distinct slopes $r$, $r'$.  More precisely, our main result gives a sufficient condition for a knot $K$ that admits no purely cosmetic surgery in terms of its Jones polynomial $V_K(t)$.

\begin{theorem}\label{main1}
If a knot $K$ has either $V_K''(1)\neq 0$ or $V_K'''(1)\neq 0$, then $S^3_r(K) \ncong S^3_{r'}(K)$ for any two distinct slopes $r$ and $r'$.

\end{theorem}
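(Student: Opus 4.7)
My plan is to split the argument according to which derivative of $V_K$ is nonzero, and to combine the classical Casson--Walker obstruction with a higher-order finite-type obstruction specific to rational homology spheres.

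First I would handle the case $V_K''(1)\neq 0$. The key observation is that $V_K''(1)$ is a finite-type knot invariant of order $2$ vanishing on the unknot, and since the space of such invariants is one-dimensional, $V_K''(1)$ must be a nonzero constant multiple of $\Delta_K''(1)$; a short skein computation (checkable on the trefoil) gives $V_K''(1)=-3\Delta_K''(1)$. Hence $\Delta_K''(1)\neq 0$, and the theorem of Boyer--Lines, based on the Casson--Walker surgery formula, immediately rules out purely cosmetic surgeries on $K$.

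Next assume $V_K''(1)=0$ and $V_K'''(1)\neq 0$, and suppose for contradiction that $S^3_r(K)\cong S^3_{r'}(K)$ as oriented manifolds for some distinct slopes $r,r'$. Combining the Heegaard Floer $d$-invariant obstructions of Ozsv\'ath--Szab\'o with the refinements of Ni--Wu, the slopes must be opposite, $r'=-r$, with $r=p/q$ subject to arithmetic constraints such as $q^2\equiv-1\pmod p$. To derive a contradiction I would apply a degree-$3$ finite-type invariant of rational homology spheres (e.g.\ Lescop's $\lambda$-invariant or the $w_3$-coefficient of the LMO invariant). Its surgery formula expresses the value on $S^3_{p/q}(K)$ as a lens-space contribution plus terms proportional to the degree-$2$ and degree-$3$ Vassiliev invariants $v_2(K)$ and $v_3(K)$. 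Since $V_K''(1)=0$ kills the $v_2$-contribution and, under this hypothesis, $v_3(K)$ is a nonzero multiple of $V_K'''(1)$, equating the order-$3$ invariant on $S^3_{p/q}(K)$ and $S^3_{-p/q}(K)$ reduces to a linear condition in $V_K'''(1)$. The coefficient is nonzero because $v_3$ is chiral: the knot contribution reverses sign under $p/q\mapsto-p/q$, whereas the lens-space contributions to $L(p,q)$ and $L(p,-q)$ cancel at this order. This forces $V_K'''(1)=0$, contradicting the assumption.

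The principal obstacle is this last step: selecting an order-$3$ invariant of rational homology spheres whose surgery formula is explicit enough to isolate the coefficient of $V_K'''(1)$, and verifying that the lens-space terms for $L(p,q)$ and $L(p,-q)$ really do cancel at order three under the arithmetic constraint on $(p,q)$ coming from Ozsv\'ath--Szab\'o/Ni--Wu. Once the correct surgery formula is in place, the final contradiction is a one-line sign computation; assembling this formula and reconciling it with the slope constraints is where the genuine work lies.
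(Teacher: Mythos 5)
Your proposal follows essentially the same route as the paper: reduce via Boyer--Lines and Ni--Wu to the case $\Delta_K''(1)=0$ (equivalently $V_K''(1)=0$) and $r'=-r=p/q$ with $q^2\equiv-1\pmod p$, then compare Lescop's $\lambda_2$ surgery formula at $p/q$ and $-p/q$, where the lens-space terms agree because $L(p,q)\cong L(p,-q)$ as oriented manifolds under that congruence and the only surviving term is $w_3(K)\cdot\frac{2q}{p}$ with $w_3(K)=\frac{1}{72}V_K'''(1)+\frac{1}{24}V_K''(1)\neq 0$. The one detail your sketch glosses over is that Lescop's formula also contains a term $\lambda_2''(K)(q/p)^2$ that is not proportional to $v_2$ or $v_3$, but it is even in $q$ and cancels in the comparison, so your argument goes through as in the paper.
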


Here, $V_K''(1)$ and $V_K'''(1)$ denote the second and third order derivative of the Jones polynomial of $K$ evaluated at $t=1$, respectively. Note that in \cite[Proposition 5.1]{BoyerLines1990}, Boyer and Lines obtained a similar result for knots $K$ with $\Delta_K''(1)\neq 0$, where $\Delta_K(t)$ is the normalized Alexander polynomial.  We shall see that $V_K''(1)=-3\Delta_K''(1)$ (Lemma \ref{a2}).  Hence, our result can be viewed as an improvement of their result \cite[Proposition 5.1]{BoyerLines1990}.

Previously, other known classes of knots that are shown not to admit purely cosmetic surgeries include the genus $1$ knots \cite{Wang2006} and the knots with $\tau(K)\neq 0$ \cite{NiWu2015}, where $\tau$ is the concordance invariant defined by Ozsv\'ath-Szab\'o \cite{OzsvathSzabo2003} and Rasmussen \cite{Rasmussen2003} using Floer homology.  Theorem \ref{main1} along with the condition $\tau(K)\neq 0$ give an effective obstruction to the existence of purely cosmetic surgery.  For example, we used Knotinfo \cite{Knotinfo}, Knot Atlas \cite{Knotatlas} and Baldwin-Gillam's table in \cite{BaldwinGillam2012} to list all knots that have simultaneous vanishing $V_K''(1)$, $V_K'''(1)$ and $\tau$ invariant.  We get the following result:

\begin{corollary}
The cosmetic surgery conjecture is true for all knots with no more than $11$ crossings, except possibly  $$10_{33}, 10_{118}, 10_{146},$$
 $$11a_{91}, 11a_{138}, 11a_{285}, 11n_{86}, 11n_{157}.$$
\end{corollary}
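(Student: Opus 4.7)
The proof combines Theorem \ref{main1} with the earlier obstruction of Ni--Wu \cite{NiWu2015}, which asserts that a knot with $\tau(K)\neq 0$ admits no purely cosmetic surgery. Together these give three independent sufficient conditions for the cosmetic surgery conjecture to hold for a given $K$: nonvanishing of $V_K''(1)$, of $V_K'''(1)$, or of $\tau(K)$. The corollary therefore reduces to a finite computational check: enumerate all knots with at most $11$ crossings, evaluate the three invariants, and identify the knots on which all three vanish simultaneously.

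The plan is to carry this out by pulling data from existing databases. The Jones polynomials of all prime knots through $11$ crossings are available in KnotInfo \cite{Knotinfo} and the Knot Atlas \cite{Knotatlas}, and from $V_K(t)$ the derivatives $V_K''(1)$ and $V_K'''(1)$ are immediate via any computer algebra system. For the concordance invariant, $\tau$ is tabulated on KnotInfo up through $10$ crossings, while Baldwin--Gillam \cite{BaldwinGillam2012} supply the full bigraded knot Floer chain complex (hence $\tau$) for every $11$-crossing knot. Intersecting the three vanishing loci across the combined list isolates the eight knots displayed in the statement; for every remaining knot of at most $11$ crossings, at least one of Theorem \ref{main1} or \cite{NiWu2015} applies and rules out purely cosmetic surgery.

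One minor subtlety to address is mirror sensitivity. The Jones polynomial satisfies $V_{\overline{K}}(t)=V_K(t^{-1})$, and a direct calculation using $V_K(1)=1$ and $V_K'(1)=0$ shows that the \emph{joint} vanishing of $V_K''(1)$ and $V_K'''(1)$ is preserved under mirror (even though $V_K'''(1)$ alone is not). The invariant $\tau$ changes sign under mirror, so its vanishing locus is also mirror-invariant. Thus the filter is well-defined regardless of orientation conventions used in the various tables, and the final list of eight exceptions is unambiguous.

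The only real obstacle is bookkeeping: the check spans several hundred knots and requires reconciling tabulation conventions across the three data sources. There is no conceptual difficulty, and once the filter is applied, the eight exceptional knots listed are precisely those for which the present paper offers no obstruction.
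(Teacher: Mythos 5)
Your proposal is correct and follows essentially the same route as the paper: the authors likewise combine Theorem \ref{main1} with the $\tau(K)\neq 0$ obstruction of \cite{NiWu2015} and filter the tables in KnotInfo, the Knot Atlas, and Baldwin--Gillam to isolate the eight knots on which $V_K''(1)$, $V_K'''(1)$, and $\tau$ all vanish. Your additional observation that the joint vanishing locus is mirror-invariant is a sensible bookkeeping check, though the paper does not discuss it explicitly.
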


\begin{remark}
In \cite{OzsvathSzabo2011}, Ozsv\'ath and Szab\'o gave the example of $K=9_{44}$, which is a genus two knot with $\tau(K)=0$ and $\Delta_K''(1)=0$.  Moreover, $S^3_1(K)$ and $S^3_{-1}(K)$ have the same Heegaard Floer homology, so no Heegaard Floer type invariant can distinguish these two surgeries.  This example shows that Theorem \ref{main1} and those criteria from Heegaard Floer theory are independent and complementary.
\end{remark}

The essential new ingredient in this paper is a surgery formula by Lescop, which involves a knot invariant $w_3$ that satisfies a crossing change formula \cite[Section 7]{Lescop2009}.  We will show that $w_3$ is actually the same as $\frac{1}{72}V_K'''(1)+\frac{1}{24}V_K''(1)$.  Meanwhile, we also observe that $w_3$ is a finite type invariant of order $3$.  This enables us to reformulate Theorem \ref{main1} in term of the finite type invariants of the knot (Theorem \ref{Cormain}).

As another application of Theorem \ref{main1}, we prove the nonexistence of purely cosmetic surgery on certain families of two-bridge knots and Whitehead doubles.  Along the way, an explicit closed formula for the canonically normalized finite type knot invariant of order 3
$$ v_3 ( K_{b_1 , c_1, \cdots , b_m, c_m} )=
\frac{1}{2}\left( \sum_{k=1}^m c_k (\sum_{i=1}^k b_i)^2 - \sum_{i=1}^m b_i (\sum_{k=i}^m c_k  )^2       \right)$$
is derived for two-bridge knots in Conway forms $K_{b_1 , c_1, \cdots , b_m, c_m}$ in Proposition \ref{v3formula}, which could be of independent interest.

The remaining part of this paper is organized as follows. In Section 2, we review background and properties of Jones polynomial, and prove crossing change formulae for derivatives of Jones polynomial.  In Section 3, we define an invariant $\lambda_2$ for rational homology spheres and then use Lescop's surgery formula to prove Theorem \ref{main1}.  In Section 4 and Section 5, we study in more detail cosmetic surgeries along two-bridge knots and Whitehead doubles. 

\vspace{5pt}\noindent{\bf Acknowledgements.}
The authors would like to thank Tomotada Ohtsuki and Ryo Nikkuni for stimulating discussions and drawing their attention to the reference \cite{Nikkuni2005}\cite{Ohtsuki}. The first named author is partially supported by JSPS KAKENHI Grant Number 26400100. The second named author is partially supported by grant from the Research Grants Council of Hong Kong Special Administrative Region, China (Project No. 14301215).


\section{Derivatives of Jones polynomial}
Suppose $(L_+$, $L_-$, $L_0)$ is a skein triple of links as depicted in Figure \ref{crossings}.

\begin{figure}[h!]
\includegraphics[scale=2]{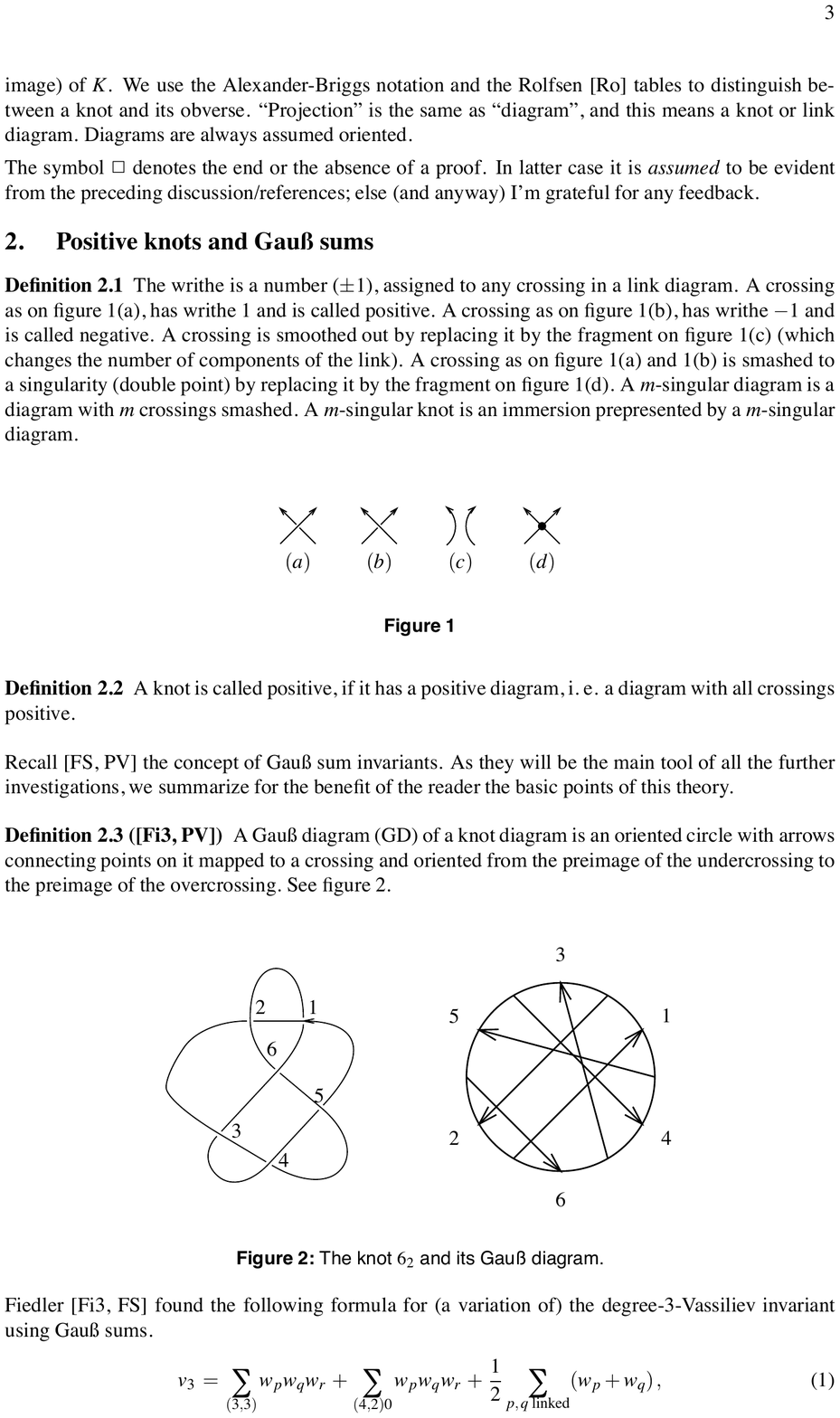}
\put(-188,-10){$ L_+$}
\put(-109,-10){$L_-$}
\put(-30, -10){$L_0$}

\caption{The link diagrams of $L_+$, $L_-$, $L_0$ are identical except at one crossing.} 
\label{crossings}
\end{figure}

\noindent
Recall that the \emph{Jones polynomial} satisfies the skein relation
\begin{equation} \label{Jones}
t^{-1} V_{L_+} (t) - t V_{L_-} (t) = ( t^{\frac{1}{2}} - t^{-\frac{1}{2}} ) V_{L_0} (t),
\end{equation}
and the \emph{Conway polynomial} satisfies the skein relation
\begin{equation}\label{Conway}
\nabla_{L_+}(z)-\nabla_{L_-}(z)=z\nabla_{L_0}(z).
\end{equation}
The \emph{normalized Alexander polynomial} $\Delta_L(t)$ is obtained by substituting $z=t^{1/2}-t^{-1/2}$ into the Conway polynomial.

For a knot $K$, denote $a_2(K)$ the $z^2$-term of the Conway polynomial $\nabla_K(z)$. It is not hard to see that $\Delta_K''(1)=2a_2(K)$.  If one differentiates Equations (\ref{Jones}) and (\ref{Conway}) twice and compares the corresponding terms, one can also show that $V_K''(1)=-6a_2(K)$.  See \cite{Murakami1986} for details. In summary, we have:

\begin{lemma}\label{a2}
For all knots $K\subset S^3$,
$$V_K''(1)=-6a_2(K)=-3\Delta_K''(1).$$

\end{lemma}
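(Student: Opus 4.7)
The lemma has two equalities and my plan is to handle them separately. The identity $-6 a_2(K) = -3\Delta_K''(1)$, i.e., $\Delta_K''(1) = 2 a_2(K)$, is a routine chain-rule computation. For a knot, $\nabla_K(z)$ is even, so $\nabla_K(z) = 1 + a_2(K) z^2 + O(z^4)$. With $z = t^{1/2} - t^{-1/2}$ satisfying $z(1)=0$ and $z'(1)=1$, differentiating $\Delta_K(t) = \nabla_K(z(t))$ twice at $t=1$ gives $\Delta_K''(1) = \nabla_K''(0) = 2 a_2(K)$.

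The main identity $V_K''(1) = -6 a_2(K)$ I would prove by a skein-theoretic induction: both sides vanish on the unknot, so it suffices to show they transform identically under crossing changes. For a skein triple $(L_+, L_-, L_0)$ in which $L_\pm$ are knots and $L_0$ is a $2$-component link, writing $\nabla_{L_0}(z) = \lk(L_0) z + O(z^3)$ and comparing the $z^2$-coefficients in (\ref{Conway}) immediately yields
$$a_2(L_+) - a_2(L_-) = \lk(L_0).$$
On the Jones side, I would differentiate (\ref{Jones}) twice and evaluate at $t=1$, then plug in the standard evaluations $V_K(1)=1$, $V_K'(1)=0$ for knots together with $V_L(1)=-2$ and $V_L'(1) = -3\lk(L)$ for $2$-component links. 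After the dust settles, the terms involving $V_\pm(1)$ and $V_\pm'(1)$ cancel and one is left with
$$V_{L_+}''(1) - V_{L_-}''(1) = -6\,\lk(L_0),$$
which is exactly $-6$ times the Conway-side formula. Hence $V_K''(1) + 6 a_2(K)$ is invariant under crossing changes, vanishes on the unknot, and therefore vanishes identically since every knot can be unknotted by finitely many crossing changes.

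The main obstacle is the bookkeeping of the second-derivative computation for $(t^{1/2}-t^{-1/2})V_{L_0}(t)$ at $t=1$, which produces several terms whose cancellation relies on the values of $V_{L_0}(1)$ and $V_{L_0}'(1)$. In particular one needs the auxiliary formula $V_L'(1) = -3 \lk(L)$ for $2$-component links; this is itself verified by a first-derivative version of the same skein argument (requiring only $V(1)$-values for knots and $2$-component links) and poses no conceptual difficulty beyond careful calculation.
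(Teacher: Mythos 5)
Your proposal is correct and follows essentially the same route as the paper, which simply differentiates the two skein relations, compares terms, and defers the bookkeeping to Murakami's paper --- precisely the crossing-change induction you carry out (and your auxiliary evaluations $V_L(1)=-2$, $V_L'(1)=-3\lk(L)$ are the same facts the paper invokes in its proof of Lemma~\ref{w3}). The chain-rule derivation of $\Delta_K''(1)=2a_2(K)$ also matches the paper's (unstated) argument.
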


\bigskip

In \cite{Lescop2009}, Lescop defined an invariant $w_3$ for a knot $K$ in a homology sphere $Y$.  When $Y=S^3$, the knot invariant $w_3$ satisfies a crossing change formula
\begin{equation}\label{w3Def}
w_3(K_+) - w_3(K_-)
=\frac{a_2 (K') + a_2(K'')}{2} -
\frac{a_2 (K_+) + a_2(K_-) + \lk^2 (K' , K'')}{4},
\end{equation}
where $(K_+, K_-, K'\cup K'')$ is a skein triple consisting of two knots $K_{\pm}$ and a two-component link $K'\cup K''$  \cite[Proposition 7.2]{Lescop2009}.  Clearly, the values of $w_3(K)$ are uniquely determined by this crossing change formula once we fix $w_3$($=0$) for the unknot.  This gives an alternative characterization of the invariant $w_3$ for knots in $S^3$.  The next lemma relates it to the derivatives of Jones polynomial.

\begin{lemma}\label{w3}
For all knots $K\subset S^3$,
$$w_3(K)=\frac{1}{72}V_K'''(1)+\frac{1}{24}V_K''(1).$$

\end{lemma}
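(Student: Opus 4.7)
Set $f(K) := \frac{1}{72} V_K'''(1) + \frac{1}{24} V_K''(1)$. The strategy is to verify that $f$ satisfies the same crossing change formula (\ref{w3Def}) as $w_3$ and agrees with $w_3$ on the unknot; since every knot in $S^3$ can be reduced to the unknot by a finite sequence of crossing changes, this will force $f \equiv w_3$. For the unknot $U$ we have $V_U(t) \equiv 1$, so $V_U''(1) = V_U'''(1) = 0$ and hence $f(U) = 0 = w_3(U)$, handling the base case.

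To establish the crossing change identity, I would differentiate the Jones skein relation
$$t^{-1} V_{K_+}(t) - t V_{K_-}(t) = (t^{1/2} - t^{-1/2}) V_L(t),$$
with $L = K' \cup K''$, twice and three times with respect to $t$, then evaluate at $t=1$. Combined with the knot-level identities $V_J(1) = 1$, $V_J'(1) = 0$, and $V_J''(1) = -6 a_2(J)$ from Lemma \ref{a2}, these equations produce expressions for $V_{K_+}''(1) - V_{K_-}''(1)$ and $V_{K_+}'''(1) - V_{K_-}'''(1)$ in terms of the link-level quantities $V_L(1)$, $V_L'(1)$, and $V_L''(1)$.

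The remaining task is to rewrite these link-level derivatives in the form required by (\ref{w3Def}). For a two-component link $L = K' \cup K''$ one has $V_L(1) = -2$, while $V_L'(1)$ is a constant multiple of $\lk(K', K'')$, and $V_L''(1)$ is a linear combination of $a_2(K')$, $a_2(K'')$, and $\lk(K', K'')^2$. These identities are standard and can also be derived by a further skein induction, reducing $L$ to a split union of unknots. Substituting these formulas into the computations of the previous paragraph and simplifying, one checks that
$$f(K_+) - f(K_-) = \frac{a_2(K') + a_2(K'')}{2} - \frac{a_2(K_+) + a_2(K_-) + \lk^2(K', K'')}{4},$$
matching the crossing change formula for $w_3$. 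The principal obstacle is bookkeeping: the coefficients $\tfrac{1}{72}$ and $\tfrac{1}{24}$ are calibrated precisely so that the various $a_2$ and $\lk^2$ contributions assemble with the correct rational weights, and any arithmetic slip in the three rounds of differentiation will invalidate the identity.
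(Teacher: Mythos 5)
Your proposal is correct and follows essentially the same route as the paper: differentiate the skein relation twice and three times at $t=1$, feed in Murakami's evaluations of $V_L(1)$, $V_L'(1)$, $V_L''(1)$ for the two-component link, and check that $f$ satisfies the crossing change formula (\ref{w3Def}) together with $f(U)=0$. The only imprecision is your description of $V_L''(1)$, which in fact also contains a constant term and a term linear in $\lk(K',K'')$ (the latter is what cancels, via the identity $\lk(K',K'')=a_2(K_+)-a_2(K_-)$ implicit in your twice-differentiated equation, to leave exactly the right-hand side of (\ref{w3Def})).
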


\begin{proof}
The main argument essentially follows from Nikkuni \cite[Proposition 4.2]{Nikkuni2005}.  We prove the lemma by showing that $\frac{1}{72}V_K'''(1)+\frac{1}{24}V_K''(1)$ satisfies an identical crossing change formula as Equation (\ref{w3Def}). To this end, we differentiate the skein formula for the Jones polynomial (\ref{Jones}) three times and evaluate at $t=1$.  Abbreviating the Jones polynomial of the skein triple $L_+=K_+$, $L_-=K_-$ and $L_0=K'\cup K''$ by $V_+ (t)$, $V_- (t)$ and $V_0 (t)$, respectively, we obtain

\begin{eqnarray*}
\left( t^{-1} V_+ (t) \right)'''|_{t=1}
&=&
-6 V_+ (1) + 6 V'_+ (1) -3 V''_+ (1) + V'''_+(1) \\
\left( t V_- (t) \right)''' |_{t=1}
&=&
3 V''_- (1) + V'''_- (1) \\
\left( (t^{1/2} - t^{-1/2} )  V_0 (t) \right)''' |_{t=1}
&=&
\frac{9}{4} V_0 (1) -3 V'_0 (1) + 3 V''_0 (1)
\end{eqnarray*}
The terms on the right hand side can be expressed as

\begin{itemize}
\item[(a)] $V_+ (1) = V_- (1) = 1$
\item[(b)] $V'_+ (1) = V'_- (1) = 0$
\item[(c)] $V''_+ (1) = -6 a_2 (K_+) $, $V''_- (1) = -6 a_2 (K_-) $
\item[(d)] $V_0 (1) = -2$
\item[(e)] $V'_0 (1) = -3 \lk (K', K'')$
\item[(f)] $V''_0 (1) = -\frac{1}{2} +3 \lk (K', K'') + 12 ( a_2 (K') + a_2 (K'')) -6 \lk^2(K', K'') $
\end{itemize}
Here, (a) and (d) are well-known; (b),(c),(e) and (f) are proved by Murakami \cite{Murakami1986}.\footnote{Murakami uses a different skein relation for the Jones polynomial, thus (e) and (f) differ by certain signs from the formula in \cite{Murakami1986}.}  After doing substitution and simplification, we have
$$
V_{+}'''(1) - V_{-}'''(1)
=
- 18\left(a_{2}(K_{+}) + a_{2}(K_{-})\right)
- 18\lk^2(K', K'')
- 18\lk(K', K'')
+ 36\left(a_{2}(K') + a_{2}(K'')\right)
$$
Meanwhile, it follows from (\ref{Conway}) and Hoste \cite[Theorem 1]{Hoste1985} that
\begin{equation}\label{a2crossingchange}
\lk(K' , K'') = a_{2}(K_{+}) - a_{2}(K_{-}).
\end{equation}
This enables us to further simplify
$$(\frac{1}{72} V_+'''(1) + \frac{1}{24} V_+''(1))-(\frac{1}{72} V_-'''(1) + \frac{1}{24} V_-''(1))$$
and reduce it to the same expression as the right hand side of (\ref{w3Def}).
As $\frac{1}{72} V_K'''(1) + \frac{1}{24} V_K''(1)$ also equals $0$ when $K$ is the unknot,  $\frac{1}{72}V_K'''(1)+\frac{1}{24}V_K''(1)$ must equal $w_3(K)$ for all $K\subset S^3$.

\end{proof}

\bigskip
We conclude the section by remarking that both Lemma \ref{a2} and Lemma \ref{w3} can be seen in a simpler way from a more natural perspective.  A knot invariant $v$ is called a \emph{finite type invariant} of order $n$ if it can be extended to an invariant of singular knots via a skein relation
$$v(\tilde{K})=v(K_+)-v(K_-)$$
where $\tilde{K}$ is the knot with a transverse double point (See Figure \ref{crossings2}), while $v$ vanishes for all singular knots with $(n+1)$ singularities.

\begin{figure}[h!]
\includegraphics[scale=2]{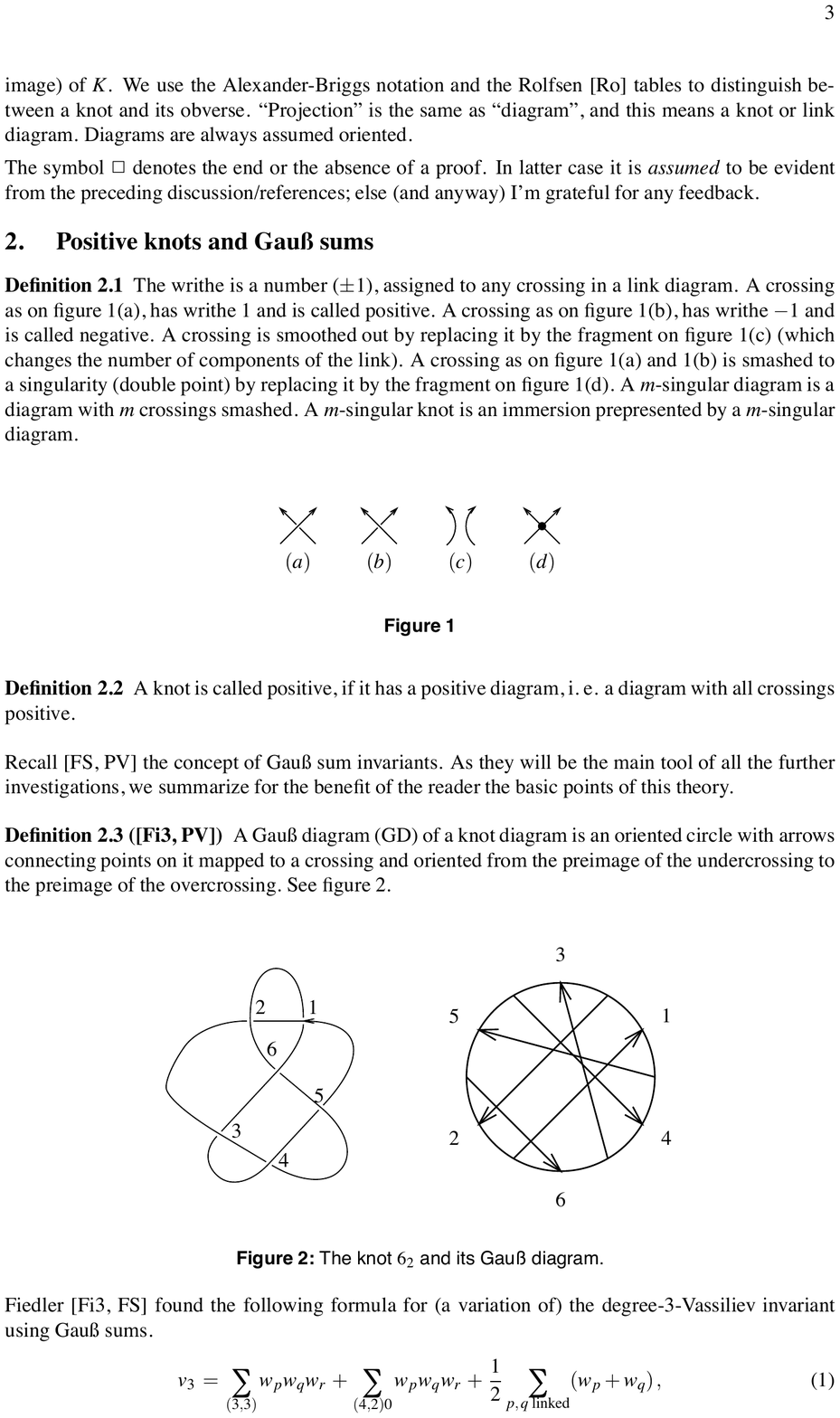}
\caption{the singular knot $\tilde{K}$ with a transverse double point        }
\label{crossings2}
\end{figure}

It follows readily from the definition that the set of finite type invariant of order $0$ consists of all constant functions.  One can also show that $a_2(K)$ and $V_K''(1)$ are finite type invariants of order $2$, while $w_3(K)$ and $V_K'''(1)$ are finite type invariants of order $3$.  As the dimension of the set of all finite type invariants of order $\leq 2$ and $\leq 3$ are two and three, respectively (see, e.g., \cite{Bar-Natan1995}), there has to be a linear dependence among the above knot invariants, from which one can easily deduce Lemma \ref{a2} and Lemma \ref{w3}.  In fact, if we denote $v_2$ and $v_3$ the finite type invariants of order $2$ and $3$ respectively normalized by the conditions that $v_2(m(K))=v_2(K)$ and $v_3(m(K))=-v_3(K)$ for any knot $K$ and its mirror image $m(K)$ and that $v_2(3_1)=v_3(3_1)=1$ for the right hand trefoil $3_1$, then it is not difficult to see that
\begin{equation}\label{v2}
v_2(K)=a_2(K)
\end{equation}
and
\begin{equation}\label{v3}
v_3(K)=-2w_3(K).
\end{equation}

\section{Lescop invariant and cosmetic surgery}

The goal of this section is to prove Theorem \ref{main1}.  Recall the following results about purely cosmetic surgery from \cite[Proposition 5.1]{BoyerLines1990} and \cite[Theorem 1.2]{NiWu2015}.

\begin{theorem}\label{MainLemma}
Suppose $K$ is a nontrivial knot in $S^3$, $r,r'\in\mathbb Q\cup\{\infty\}$ are two distinct slopes such that $S^3_{r}(K)\cong S^3_{r'}(K)$
as oriented manifolds. Then the following assertions are true:
\begin{itemize}
\item[(a)] $\Delta_K''(1)=0$.
\item[(b)] $r=-r'$.
\item[(c)] If $r=p/q$, where $p,q$ are coprime integers, then $q^2 \equiv -1\; (\bmod\; p)$.
\item[(d)] $\tau(K)=0$, where $\tau$ is the concordance invariant defined by Ozsv\'ath-Szab\'o \cite{OzsvathSzabo2003} and Rasmussen \cite{Rasmussen2003}.
\end{itemize}

\end{theorem}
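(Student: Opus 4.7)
The plan is to assemble the four conclusions from two independent ingredients: the Casson--Walker invariant produces parts (a), (b), and (c), while Heegaard Floer correction terms produce (d). Since the statement collects results of Boyer--Lines and Ni--Wu, I would outline how each part emerges rather than give a single unified argument.

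For parts (a)--(c), the central input is the surgery formula for the Casson--Walker invariant $\lambda$ of a rational homology sphere,
\[
\lambda(S^3_{p/q}(K)) = \lambda(L(p,q)) + \tfrac{q}{2p}\,\Delta_K''(1),
\]
together with the fact that $\lambda$ is invariant under orientation-preserving homeomorphism. First I would extract purely homological constraints from $S^3_{r}(K) \cong S^3_{r'}(K)$: writing $r = p/q$ and $r' = p'/q'$ in lowest terms, $|H_1|$ forces $|p| = |p'|$, and the linking form (an oriented invariant of $H_1$) matches the standard linking form of $L(p,q)$, restricting $q' \pmod{p}$ to a short list of residues, essentially $\pm q^{\pm 1}$. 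Next I would substitute each admissible pair into the surgery formula; using the known symmetries of $\lambda(L(p,q))$ under $q \to -q$ and $q \to q^{-1}$, the equation $\lambda(S^3_r(K)) = \lambda(S^3_{r'}(K))$ reduces to a linear equation in $\Delta_K''(1)$ with explicit coefficient in $p,q,q'$. A short case analysis shows that the only way this equation can be consistent with $r \ne r'$ is if $r' = -r$ and $q^2 \equiv -1 \pmod{p}$, and in that surviving case the coefficient of $\Delta_K''(1)$ is nonzero while the lens-space discrepancy vanishes, forcing $\Delta_K''(1) = 0$. This yields (a), (b), and (c) simultaneously.

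For part (d), I would import the Ni--Wu argument built on Heegaard Floer correction terms $d$ and the Ozsv\'ath--Szab\'o rational surgery (mapping cone) formula. That formula expresses $d(S^3_{p/q}(K),\mathfrak{s})$ as the $d$-invariant of the corresponding spin$^c$ lens space minus $2\max(V_i,H_j)$, where $V_i, H_j$ are the nonnegative, monotone local $h$-invariants of $K$ whose values at $i=0$ detect the sign of $\tau(K)$ (namely $V_0 = 0 \iff \tau(K) \le 0$ and $H_0 = 0 \iff \tau(K) \ge 0$). Having already established $r' = -r$ in part (b), an oriented homeomorphism $S^3_r(K) \cong S^3_{-r}(K)$ matches spin$^c$ structures up to conjugation. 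Comparing correction terms across all spin$^c$ structures yields identities that force the $V$-sequence of $K$ to coincide with the $H$-sequence, and in particular $V_0(K) = H_0(K) = 0$, whence $\tau(K) \ge 0$ and $\tau(K) \le 0$, i.e.\ $\tau(K) = 0$.

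The main obstacle throughout is bookkeeping rather than new ideas: tracking orientation conventions for $\lambda$ and the linking form in the Boyer--Lines step, identifying spin$^c$ structures on the two sides of the oriented homeomorphism in the Ni--Wu step, and handling the Dedekind-sum manipulations underlying the lens-space values of $\lambda(L(p,q))$. In particular one must check that the case analysis in (a)--(c) genuinely exhausts every oriented homeomorphism type of the relevant linking forms, and that the mapping cone identifications respect the spin$^c$ conjugation symmetry needed in (d). With those conventions pinned down, the proof is the concatenation of the arguments in \cite{BoyerLines1990} and \cite{NiWu2015}.
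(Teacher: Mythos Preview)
The paper does not prove this theorem; it quotes it with attribution to \cite{BoyerLines1990} for (a) and \cite{NiWu2015} for (b)--(d) and then uses it as a black box, so there is no in-paper argument to compare against beyond those citations.

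Your sketch for (a) via the Casson--Walker surgery formula and for (d) via the Ni--Wu correction-term argument is faithful to those sources. The gap is in your route to (b) and (c). The linking form on $H_1(S^3_{p/q}(K))\cong\Z/p$ determines $q\bmod p$ only up to multiplication by a square, not up to the four classes $\pm q^{\pm1}$ you claim; and even granting that short list (say via Reidemeister torsion rather than the linking form), the single scalar identity
\[
\lambda(L(p,q))-\lambda(L(p,q'))=\frac{q'-q}{2p}\,\Delta_K''(1)
\]
cannot by itself force $r'=-r$ and $q^2\equiv-1\pmod p$. For instance, if $q'\equiv q\pmod p$ with $q'\neq q$ the left side vanishes and you learn only $\Delta_K''(1)=0$, with (b) and (c) still open. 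In \cite{NiWu2015}, parts (b) and (c) are obtained from the \emph{entire} vector of Heegaard Floer $d$--invariants (one equation per $\mathrm{spin}^c$ structure) through the rational surgery formula---the very machinery you correctly invoke for (d). So (b), (c), (d) all come from the Heegaard Floer package, while the Casson--Walker input is responsible only for (a); your decomposition places the dividing line in the wrong place.
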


\bigskip
Our new input for the cosmetic surgery problem is Lescop's $\lambda_2$ invariant which, roughly speaking, is the degree $2$ part of the Kontsevich-Kuperberg-Thurston invariant of rational homology spheres \cite{Lescop2009}.  Like the famous Le-Murakami-Ohtsuki invariant, the Kontsevich-Kuperberg-Thurston invariant is universal among finite type invariants for homology spheres \cite{Kontsevich1994}\cite{KuperbergThurston}\cite{LMO1998}.  See also Ohtsuki \cite{Ohtsuki} for the connection to perturbative and quantum invariants of three-manifolds.

We briefly review the construction.  A \emph{Jacobi diagram} is a graph without simple loop whose vertices all have valency $3$. The degree of a Jacobi diagram is defined to be half of the total number of vertices of the diagram.  If we denote by $\mathcal{A}_n$ the vector space generated by degree $n$ Jacobi diagrams subject to certain equivalent relations AS and IHX, then the degree $n$ part $Z_n$ of the Kontsevich-Kuperberg-Thurston invariant takes its value in $\mathcal{A}_n$.

\begin{example}
Simple argument in combinatorics implies that
\begin{itemize}
\item  $\mathcal{A}_1$ is an $1$-dimensional vector space generated by the Jacobi diagram $\tata$
\item $\mathcal{A}_2$ is a $2$-dimensional vector space generated by the Jacobi diagrams $\tata\; \tata$ and $\tetra$

\end{itemize}

\end{example}

Many interesting real invariants of rational homology spheres can be recovered from the Kontsevich-Kuperberg-Thurston invariant $Z$ by composing a linear form on the space of Jacobi diagrams.  In the simplest case, the Casson-Walker invariant $\lambda_1$  is $W_1\circ Z_1$, where $W_1(\tata)=2$.  We shall concentrate on the case of the degree $2$ invariant $\lambda_2= W_2 \circ Z_2$, where $W_2(\;\tetra)=1$ and $W_2(\tata \;\tata)=0$.  The following surgery formula for $\lambda_2$ is proved by Lescop and will play a central role in the proof of our main result.

\begin{theorem}\cite[Theorem 7.1]{Lescop2009}
\label{LescopSurgery} The invariant $\lambda_2$ satisfies the surgery formula
$$\lambda_2(Y_{p/q}(K))-\lambda_2(Y)=\lambda_2''(K)(\frac{q}{p})^2+w_3(K)\frac{q}{p}
+a_2(K)c(q/p)+\lambda_2(L(p,q))$$
for all knots $K\subset Y$.
\end{theorem}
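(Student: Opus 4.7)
Since the theorem is attributed to Lescop, my plan is to reconstruct the argument within the configuration-space (Kontsevich--Kuperberg--Thurston) framework on which $\lambda_2$ is built. Because $\lambda_2 = W_2 \circ Z_2$ is extracted from the degree-$2$ piece of the KKT invariant, a surgery formula for $\lambda_2$ reduces to controlling how $Z_2$ transforms under a rational Dehn surgery along a knot $K \subset Y$.

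First, I would view $Y_{p/q}(K)$ as obtained by removing a tubular neighbourhood of $K$ and regluing the solid torus with slope $p/q$. The configuration-space integrals defining $Z_2$ then split into three kinds of contributions: a bulk piece supported away from $K$, a piece from propagators meeting the reglued solid torus, and a linking piece detecting $K$ itself. Degree counting on Jacobi diagrams of degree two (with at most four univalent vertices on a strand parallel to $K$) forces the $K$-dependent part to be a polynomial of degree at most two in the surgery parameter $q/p$, whose coefficients are Jacobi-diagram-valued knot invariants of $K$, plus an ambient term depending only on $(p,q)$.

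Next, I would identify the coefficients one at a time. The part independent of $K$ must equal the result of surgery on the unknot, contributing $\lambda_2(L(p,q))$. The $(q/p)$-linear coefficient is a primitive finite-type knot invariant of order $3$; by universality of the KKT invariant in that order, together with a symmetry argument under orientation reversal of $K$, it is proportional to $w_3(K)$, and the constant of proportionality is pinned down to $1$ by a single explicit check (for instance, $\pm 1$-surgery on a trefoil, using Lemma \ref{w3}). The $(q/p)^2$ coefficient is the degree-$2$ piece of the KKT invariant applied to $K$ itself, identified as $\lambda_2''(K)$. Finally, the term $a_2(K)\,c(q/p)$ is a framing-anomaly contribution arising from propagator forms near the surgery boundary; by universality, together with the $a_2$-crossing-change relation \eqref{a2crossingchange}, the knot dependence collapses to $a_2(K)$ and the slope dependence is a universal function $c$.

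The main obstacle is the rigorous control of the configuration-space integrals in a neighbourhood of the surgery torus, namely showing that the anomaly term really does factor as $a_2(K)$ times a universal function $c(q/p)$ independent of $K$. This requires a careful matching of propagator forms to the surgery framing, evaluation on enough test surgeries (integer surgeries on the unknot and on trefoils) to characterise $c$, and a proof that no further knot-dependent contribution is hidden in the boundary integrals. Once this anomaly is isolated, the remaining three coefficients are forced by the universality of the KKT invariant among finite-type invariants of rational homology spheres together with the crossing-change formula \eqref{w3Def} for $w_3$ underlying Lemma \ref{w3}.
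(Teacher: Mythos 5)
This statement is not proved in the paper at all: it is quoted verbatim from Lescop \cite[Theorem 7.1]{Lescop2009}, and the authors explicitly decline even to recall the definitions of $\lambda_2''(K)$ and $c(q/p)$. So there is no internal argument to compare yours against; the only ``proof'' the paper offers is the citation.

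Your reconstruction is a reasonable sketch of the strategy Lescop actually follows, but as written it has a genuine gap, and you identify it yourself: the control of the degree-$2$ configuration-space integrals near the reglued solid torus is not a technical afterthought, it is the entire content of the theorem. Two specific points. First, your degree-counting claim --- that the $K$-dependent contribution is a polynomial of degree at most two in $q/p$ plus a term depending only on $(p,q)$ --- is inconsistent with the formula you are trying to prove, which contains the mixed term $a_2(K)\,c(q/p)$; if $c$ were a polynomial of degree at most two in $q/p$ this term could be absorbed into $\lambda_2''(K)(q/p)^2$ and $w_3(K)(q/p)$ and would not appear separately, so the asserted structure of the splitting is already wrong before the coefficients are identified. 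Second, universality of the Kontsevich--Kuperberg--Thurston invariant among finite-type invariants of rational homology spheres says nothing a priori about how $\lambda_2(Y_{p/q}(K))$ varies with the slope for a fixed knot $K$; proving that this variation takes the stated shape, with finite-type knot invariants as coefficients, is exactly what Lescop's splitting and surgery machinery establishes over the course of \cite{Lescop2009}, and one cannot invoke that structure to ``identify the coefficients by universality'' before it has been proved to exist. Evaluating test surgeries on unknots and trefoils pins down constants only after the structural statement is in hand. For the purposes of this paper the correct move is simply to cite Lescop, as the authors do.
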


Here, $a_2(K)$ is the $z^2$-coefficient of $\nabla_K(z)$, and $L(p,q)$ is the lens space obtained by $p/q$ surgery on the unknot.\footnote{We use a different sign convention of lens spaces from Lescop's original paper.} Then $w_3(K)$ is a knot invariant, which was shown earlier in Lemma \ref{w3} to be equal to $\frac{1}{72}V_K'''(1)+\frac{1}{24}V_K''(1)$ for $K\subset S^3$.  The terms $\lambda_2''(K)$ and $c(q/p)$ are both explicitly defined in \cite{Lescop2009}, but they will not be needed for our purpose.  For the moment, we make the following simple observation.

\begin{proposition}\label{w3Cosmetic}
Suppose $K$ is a knot in $S^3$ with $a_2(K)=0$, and $p, q$ are nonzero integers satisfying $q^2 \equiv -1 \;(\bmod\; p)$.  Then $\lambda_2(S^3_{p/q}(K)) = \lambda_2(S^3_{-p/q}(K))$ if and only if $w_3(K)=0$.

\end{proposition}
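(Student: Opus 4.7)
The plan is to apply Lescop's surgery formula (Theorem \ref{LescopSurgery}) twice with $Y=S^3$, once for the slope $p/q$ and once for the slope $-p/q = p/(-q)$, and then subtract. Writing out both identities, I observe that the leading term $\lambda_2''(K)(q/p)^2$ is an even function of $q/p$ and hence cancels, while $w_3(K)(q/p)$ is odd and doubles. The two $a_2(K)$ terms are killed by the hypothesis $a_2(K)=0$. This leaves
$$\lambda_2(S^3_{p/q}(K)) - \lambda_2(S^3_{-p/q}(K)) = 2w_3(K)\frac{q}{p} + \bigl[\lambda_2(L(p,q)) - \lambda_2(L(p,-q))\bigr].$$

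The main obstacle, and the step that actually uses the hypothesis $q^2 \equiv -1 \pmod p$, is to show that the bracketed difference of lens space invariants vanishes. For this I would appeal to the classical classification of lens spaces: $L(p,q_1) \cong L(p,q_2)$ as oriented $3$-manifolds if and only if $q_2 \equiv q_1^{\pm 1} \pmod p$. The hypothesis gives $q \cdot (-q) \equiv 1 \pmod p$, i.e.\ $-q \equiv q^{-1} \pmod p$. Hence $L(p,-q) \cong L(p, q^{-1}) \cong L(p,q)$ as oriented manifolds, and in particular $\lambda_2(L(p,q)) = \lambda_2(L(p,-q))$.

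Combining these two ingredients,
$$\lambda_2(S^3_{p/q}(K)) - \lambda_2(S^3_{-p/q}(K)) = 2w_3(K)\frac{q}{p}.$$
Since $p,q$ are both nonzero, the right-hand side vanishes if and only if $w_3(K)=0$, which is the claimed equivalence. The only genuine subtlety lies in the lens space step, where one has to be careful with sign conventions (particularly in view of the footnote following Theorem \ref{LescopSurgery}), but these reconcile consistently because the argument only uses the manifold $L(p,-q)$ up to orientation-preserving homeomorphism.
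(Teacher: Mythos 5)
Your proposal is correct and follows essentially the same route as the paper: apply Lescop's surgery formula to both slopes, cancel the $\lambda_2''$ and $a_2$ terms, and use the oriented classification of lens spaces together with $q^2\equiv -1 \pmod p$ to conclude $\lambda_2(L(p,q))=\lambda_2(L(p,-q))$, leaving the difference $2w_3(K)\,q/p$. The only cosmetic difference is that you invoke $a_2(K)=0$ to kill the third term where the paper simply observes that term is equal for the two slopes; both are valid.
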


\begin{proof}
We apply the surgery formula in Theorem \ref{LescopSurgery}. Note that the first and third terms of the right hand side are clearly equal for $p/q$ and $-p/q$ surgery.  Next, recall the well-known theorem that two lens spaces $L(p, q_1)$ and $L(p, q_2)$ are equivalent up to orientation-preserving homeomorphisms if and only if $q_1 \equiv q_2^{\pm1} \;(\bmod\; p)$.  In particular, this implies the lens spaces $L(p, q)\cong L(p, -q)$ as oriented manifolds if $q^2 \equiv -1 \;(\bmod \; p)$, so their $\lambda_2$ invariants are obviously the same.  Consequently, $$\lambda_2(S^3_{p/q}(K))- \lambda_2(S^3_{-p/q}(K))=w_3(K)\frac{2q}{p}, $$ and the statement follows readily.

\end{proof}

\begin{proof}[Proof of Theorem \ref{main1}]
In light of Theorem \ref{MainLemma},  we only need to consider the case when $\Delta_K''(1)=0$ and $q^2\equiv -1 \;(\bmod \;p)$, for otherwise, the pair of manifolds $S^3_{p/q}(K)$ and $S^3_{-p/q}(K)$ will be non-homeomorphic as oriented manifolds.  Thus $V_K''(1)=-3\Delta_K''(1)=0$. If we now assume $V_K'''(1) \neq 0$, then Lemma \ref{w3} implies that $w_3(K) \neq 0$.  We can then apply Proposition \ref{w3Cosmetic} and conclude that $\lambda_2(S^3_{p/q}(K)) \neq \lambda_2(S^3_{-p/q}(K))$.  Consequently, $S^3_{p/q}(K) \ncong S^3_{-p/q}(K)$. 

\end{proof}

Given (\ref{v2}) and (\ref{v3}), Theorem \ref{main1} can be stated in the following equivalent way, which is particularly useful in the case where it is easier to calculate the finite type invariant $v_3$ (or equivalently $w_3$) than the Jones polynomial.

\begin{theorem}\label{Cormain}
If a knot $K$ has the finite type invariant $v_2(K)\neq0$ or $v_3(K)\neq 0$, then $S^3_r(K) \ncong S^3_{r'}(K)$ for any two distinct slopes $r$ and $r'$.
\end{theorem}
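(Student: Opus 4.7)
The plan is to obtain Theorem \ref{Cormain} as a straightforward corollary of Theorem \ref{main1}, using the identifications between the canonically normalized finite type invariants $v_2$, $v_3$ and the derivatives $V_K''(1)$, $V_K'''(1)$ already established in the previous section. So the whole argument is a short translation, and there is essentially no new obstacle beyond checking that the hypothesis $v_2(K)\neq 0$ or $v_3(K)\neq 0$ implies $V_K''(1)\neq 0$ or $V_K'''(1)\neq 0$.

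First, I would handle the $v_2$ case. By the identity (\ref{v2}), $v_2(K)=a_2(K)$, so $v_2(K)\neq 0$ is the same as $a_2(K)\neq 0$. Lemma \ref{a2} then gives $V_K''(1)=-6a_2(K)\neq 0$, and Theorem \ref{main1} applies directly to conclude $S^3_r(K)\ncong S^3_{r'}(K)$ for all distinct $r,r'$.

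Next, I would handle the $v_3$ case. By (\ref{v3}), $v_3(K)=-2w_3(K)$, so $v_3(K)\neq 0$ is equivalent to $w_3(K)\neq 0$. Lemma \ref{w3} expresses
\[
w_3(K)=\tfrac{1}{72}V_K'''(1)+\tfrac{1}{24}V_K''(1),
\]
so the non-vanishing of $w_3(K)$ forces at least one of $V_K''(1)$ and $V_K'''(1)$ to be nonzero. Theorem \ref{main1} again applies and gives the desired conclusion.

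The only thing worth commenting on is that the converse reduction (from Theorem \ref{Cormain} back to Theorem \ref{main1}) works just as cleanly: a nonzero $V_K''(1)$ forces $v_2(K)\neq 0$ via Lemma \ref{a2} and (\ref{v2}), while a nonzero $V_K'''(1)$ combined with $V_K''(1)=0$ forces $w_3(K)\neq 0$ and hence $v_3(K)\neq 0$ via Lemma \ref{w3} and (\ref{v3}). So the two theorems are logically equivalent, and no step in the argument requires anything beyond the linear algebra of the two-term expression for $w_3$ together with previously established facts.
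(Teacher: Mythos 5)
Your proposal is correct and is essentially the paper's own (implicit) argument: the paper simply asserts that Theorem \ref{Cormain} is a restatement of Theorem \ref{main1} via the identities (\ref{v2}) and (\ref{v3}), and your case analysis spells out exactly that reduction, including the correct observation that the two hypotheses are equivalent disjunctions.
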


\section{Examples of two-bridge knots}

In this section, we derive an explicit formula for $v_3$ and use it to study the cosmetic surgery problem for two-bridge knots.  Following the presentation of \cite[Section 2.1]{KawauchiBook}, we sketch the basic properties and notations for two-bridge knots.

Every two-bridge knot can be represented by a rational number $-1<\frac{\alpha}{\beta}<1$ for some odd integer $\alpha$ and even integer $\beta$.  If we write this number as a continued fraction with even entries and of even length
$$\frac{\alpha}{\beta}=[2b_1, 2c_1, \cdots, 2b_m, 2c_m]=2b_1 + \cfrac{1}{2c_1
          + \cfrac{1}{\cdots
          + \cfrac{1}{2b_m + \cfrac{1}{2c_m} } } }$$
for some nonzero integers $b_i$'s and $c_i$'s,\footnote{Such a representation always exists by elementary number theory.} then we obtain the \textit{Conway form} $C(2b_1, 2c_1, \cdots , 2b_m , 2c_m)$ of the two-bridge knot, which is a special knot diagram as depicted in Figure~\ref{ConwayForm}.  We will write $K_{b_1, c_1, \cdots, b_m, c_m}$ for the knot of Conway form $C(2b_1, 2c_1, \cdots , 2b_m , 2c_m)$. The genus of $K_{b_1, c_1, \cdots , b_m , c_m}$ is $m$; and conversely, every two-bridge knot of genus $m$ has such a representation.

\begin{figure}[htb]\centering
\includegraphics{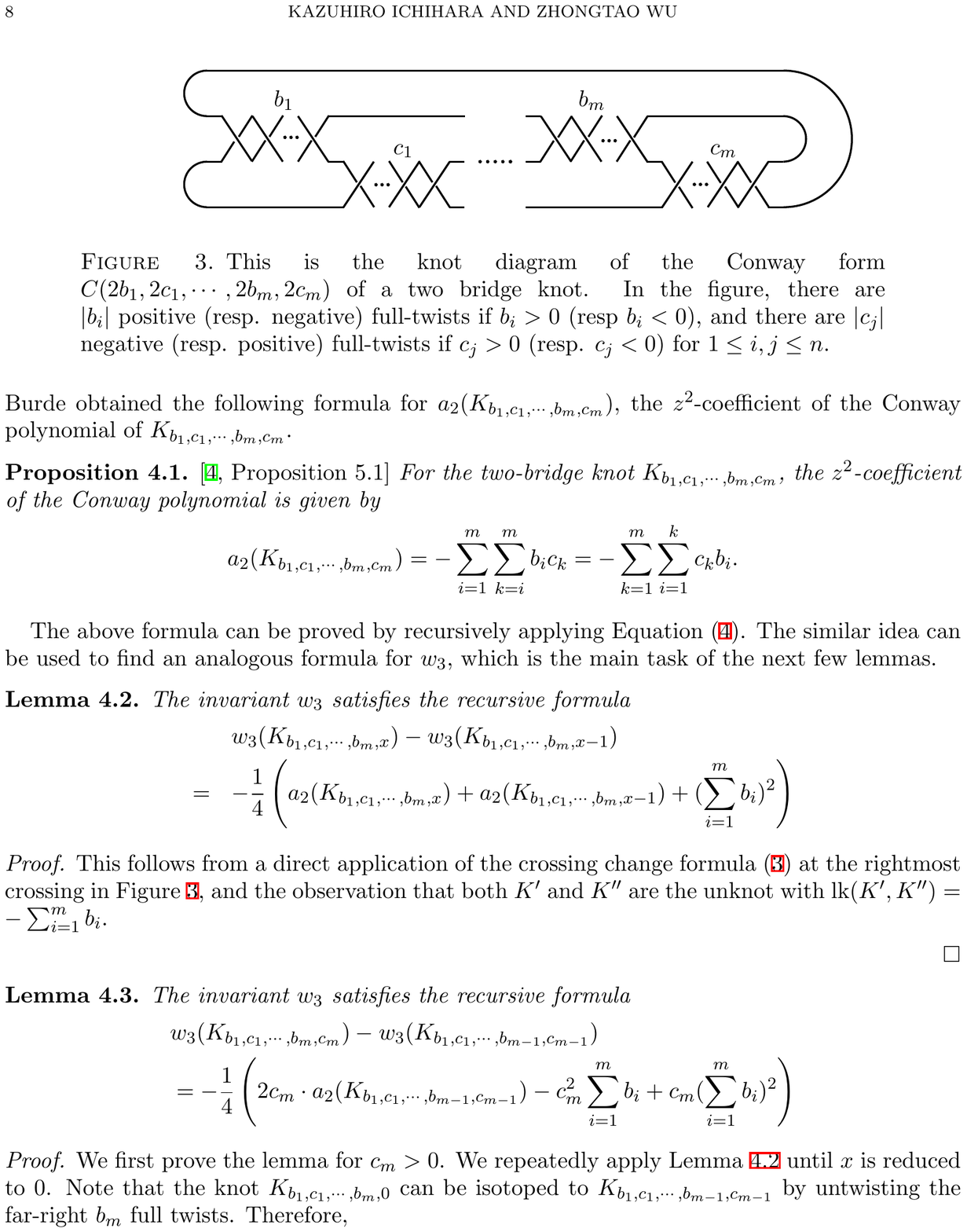}
\caption{This is the knot diagram of the Conway form $C(2b_1, 2c_1, \cdots , 2b_m , 2c_m)$ of a two bridge knot.   In the figure, there are $| b_i |$ positive (resp. negative) full-twists if $b_i >0$
(resp $b_i <0$),
and there are $| c_j |$ negative (resp. positive) full-twists if $c_j >0$ (resp. $c_j <0$) for $1 \le i,j \le n$.
  }
\label{ConwayForm}
\end{figure}

\bigskip\noindent
Burde obtained the following formula for $a_2(K_{b_1, c_1, \cdots, b_m, c_m})$, the $z^2$-coefficient of the Conway polynomial of $K_{b_1, c_1, \cdots , b_m , c_m}$.

\begin{proposition}\cite[Proposition 5.1]{Burde1990}\label{Burde}
For the two-bridge knot $K_{b_1, c_1, \cdots, b_m, c_m}$, the $z^2$-coefficient of the Conway polynomial is given by $$a_2(K_{b_1, c_1, \cdots, b_m, c_m})=-\sum_{i=1}^m\sum_{k=i}^m b_ic_k= -\sum_{k=1}^{m}\sum_{i=1}^k c_k b_i.$$

\end{proposition}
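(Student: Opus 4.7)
The plan is to compute $a_2$ directly from a Seifert matrix of the Conway form. Applying Seifert's algorithm to the standard diagram $C(2b_1,2c_1,\ldots,2b_m,2c_m)$ produces a Seifert surface $F$ of genus exactly $m$, whose first homology admits a natural basis $\{e_1,f_1,\ldots,e_m,f_m\}$ with $e_i$ associated to the $i$-th $2b_i$-twist region and $f_i$ to the $i$-th $2c_i$-twist region. A direct inspection of linking numbers yields a $2m\times 2m$ Seifert matrix $V$ of block-triangular form: each diagonal $2\times 2$ block records the self-pairings at level $i$ (with diagonal entries $-b_i$ and $c_i$), while the off-diagonal coupling blocks between consecutive levels encode the plumbing of adjacent twist regions.

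With $V$ in hand, $\nabla_K(z)=\det(t^{1/2}V-t^{-1/2}V^T)$ for $z=t^{1/2}-t^{-1/2}$, and $a_2$ is the $z^2$-coefficient. The block-triangular structure lets the Laplace expansion of this determinant isolate the $z^2$-term: one level $k$ contributes its degree-two piece (carrying the factor $c_k$), while the coupling through the off-diagonal blocks cumulates the $b_i$-contributions for all $i\leq k$. Summing over $k$ gives
\[
a_2=-\sum_{k=1}^m c_k\sum_{i=1}^k b_i,
\]
and the second form in the proposition follows by interchange of summation.

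The main obstacle is pinning down the exact form of the Seifert matrix, especially the signs of the entries, as these propagate through to the overall sign of the final formula. An alternative route is induction on $m$ using the Conway skein relation and Hoste's identity~\eqref{a2crossingchange} applied to a crossing inside the final $2c_m$-twist region: a single crossing change followed by a Reidemeister II move reduces $c_m$ by one, while the oriented smoothing produces a two-component link whose linking number, after rational-tangle manipulation, equals $\pm\sum_{i=1}^m b_i$; telescoping the reduction from $c_m$ down to $0$ and closing with the inductive hypothesis gives the formula. Either approach concentrates the combinatorial content of the proof in a linking-number computation linear in the $b_i$'s.
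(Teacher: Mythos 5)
Your second, ``alternative'' route is the one the paper actually intends: Proposition \ref{Burde} is quoted from Burde, and the only justification the paper itself offers is the single sentence that the formula ``can be proved by recursively applying Equation (\ref{a2crossingchange})'' --- i.e., exactly your induction via Hoste's identity. Changing a crossing in the $2c_m$-twist region smooths to a two-component link, Hoste's identity gives $a_2(K_{b_1,\dots,b_m,x})-a_2(K_{b_1,\dots,b_m,x-1})=\lk(K',K'')$, and telescoping down to $c_m=0$ (where $K_{b_1,\dots,b_m,0}\cong K_{b_1,\dots,b_{m-1},c_{m-1}}$) closes the induction. The one point you leave open there --- the sign in $\pm\sum_{i=1}^m b_i$ for the linking number --- is precisely what determines the overall sign of the formula, so it cannot be waved off; with the orientation conventions of Figure \ref{ConwayForm} the paper records it (in the proof of Lemma \ref{w3RecursiveLemma}) as $\lk(K',K'')=-\sum_{i=1}^m b_i$, which yields $a_2(K_{b_1,\dots,b_m,x})=a_2(K_{b_1,\dots,b_{m-1},c_{m-1}})-x\sum_{i=1}^m b_i$ and hence the stated double sum; you should also note that for $c_m<0$ the telescoping runs upward to $0$. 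Your primary route via a genus-$m$ Seifert matrix of the Conway form is a genuinely different and perfectly viable computation (closer in spirit to Burde's original argument), but as written it is a plan rather than a proof: the entire content of the proposition is concentrated in the entries and signs of the Seifert matrix that you defer, so nothing is actually verified until that matrix is written down. Between the two, the skein recursion is preferable in this context because the identical recursion is reused for $w_3$ in Lemmas \ref{w3RecursiveLemma} and \ref{w3Lemma2}, so the single linking-number computation $\lk(K',K'')=-\sum_{i=1}^m b_i$ pays for both propositions at once.
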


The above formula can be proved by recursively applying Equation (\ref{a2crossingchange}). The similar idea can be used to find an analogous formula for $w_3$, which is the main task of the next few lemmas.

\begin{lemma}\label{w3RecursiveLemma}
The invariant $w_3$ satisfies the recursive formula
\begin{eqnarray*}
&& w_3 ( K_{b_1 , c_1, \cdots , b_m, x} ) - w_3 ( K_{b_1 , c_1, \cdots , b_{m}, x-1} ) \\
&=& -\frac{1}{4} \left( a_2  (K_{b_1 , c_1, \cdots , b_m, x}) + a_2 ( K_{b_1 , c_1, \cdots , b_{m}, x-1} ) + (\sum_{i=1}^{m}b_i)^2 \right)
\end{eqnarray*}

\end{lemma}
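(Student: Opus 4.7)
The plan is to apply Lescop's crossing change formula (Equation~(\ref{w3Def})) at a crossing in the final twist region of the Conway form. Since $c_m=x$ corresponds to $|x|$ full-twists of a fixed sign in the $c_m$ region, the knots $K_{b_1,\ldots,b_m,x}$ and $K_{b_1,\ldots,b_m,x-1}$ are related by a single crossing change there (followed by a Reidemeister~II cancellation of the resulting opposite-sign pair). Denote the resulting skein triple by $(K_+,K_-,K'\cup K'')$, where the identification of $\{K_+,K_-\}$ with $\{K_{b_1,\ldots,b_m,x},K_{b_1,\ldots,b_m,x-1}\}$ is dictated by the sign of the crossings in the $c_m$ region (and thus by the sign of $x$).

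The first key step is to identify the smoothed link $K'\cup K''$. Since the oriented resolution of a crossing inside a rational tangle is again a rational tangle, the resulting two-component link is a rational (two-bridge) link; by the classification of rational links, each component of such a two-component link is isotopic to the unknot. Consequently $a_2(K')=a_2(K'')=0$, and Lescop's formula collapses to
$$w_3(K_+)-w_3(K_-)=-\frac{1}{4}\left(a_2(K_+)+a_2(K_-)+\lk^2(K',K'')\right).$$

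It remains to evaluate $\lk^2(K',K'')$, and for this I plan to combine the $a_2$ crossing change formula (\ref{a2crossingchange}) with Burde's formula (Proposition~\ref{Burde}). The former gives
$$\lk(K',K'')=\pm\bigl(a_2(K_{b_1,\ldots,b_m,x})-a_2(K_{b_1,\ldots,b_m,x-1})\bigr),$$
and in Burde's expression only the $k=m$ summand is affected by the replacement $c_m=x\mapsto x-1$, so this difference equals $\mp\sum_{i=1}^m b_i$. Squaring, $\lk^2(K',K'')=\bigl(\sum_{i=1}^m b_i\bigr)^2$, and substituting into the simplified Lescop formula yields the claimed recursion.

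The main obstacle will be the diagrammatic verification that the oriented smoothing at the chosen crossing in the $c_m$ region indeed produces a \emph{two-component} rational link (so that Lescop's formula applies in the stated form), together with a careful sign check to confirm that the identification of $K_+$ versus $K_-$ with the two knots at hand yields precisely the coefficient $-\tfrac{1}{4}$ on the right-hand side rather than $+\tfrac{1}{4}$; this depends on the orientation conventions of the Conway form and on whether $x$ is positive or negative.
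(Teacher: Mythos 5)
Your proposal is correct and follows essentially the same route as the paper: apply the crossing change formula (\ref{w3Def}) at a crossing of the last twist region, note that $K'$ and $K''$ are unknots so the $a_2(K')+a_2(K'')$ term vanishes, and substitute $\lk^2(K',K'')=(\sum_{i=1}^m b_i)^2$. The only (harmless) difference is that the paper reads $\lk(K',K'')=-\sum_{i=1}^m b_i$ directly off the Conway-form diagram, whereas you recover it from Equation (\ref{a2crossingchange}) together with Burde's formula, which neatly sidesteps the diagrammatic computation and, since only the square of the linking number enters, also renders the sign ambiguity you flag immaterial for that term.
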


\begin{proof}
This follows from a direct application of the crossing change formula (\ref{w3Def}) at the rightmost crossing in Figure \ref{ConwayForm}, and the observation that both $K'$ and $K''$ are the unknot with $\lk(K', K'')=-\sum_{i=1}^{m}b_i$.

\end{proof}

\begin{lemma}\label{w3Lemma2}
The invariant $w_3$ satisfies the recursive formula
\begin{align*} & w_3 ( K_{b_1 , c_1, \cdots , b_m, c_m} ) - w_3 ( K_{b_1 , c_1, \cdots , b_{m-1}, c_{m-1}} )  \\
&=- \frac{1}{4} \left( 2c_m \cdot a_2 ( K_{b_1 , c_1, \cdots , b_{m-1 }, c_{m-1} } ) - c_m^2 \sum_{i=1}^m b_i+ c_m (\sum_{i=1}^{m}b_i)^2 \right)
\end{align*}

\end{lemma}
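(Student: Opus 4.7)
The plan is to telescope the one-step recursion of Lemma \ref{w3RecursiveLemma}, summing over the last variable until it collapses to a genus $(m-1)$ Conway form. Concretely, assume $c_m>0$ (the case $c_m<0$ follows by summing in the reverse direction with the identical conclusion). Summing Lemma \ref{w3RecursiveLemma} over $x=1,2,\ldots,c_m$ yields
\begin{equation*}
w_3(K_{b_1,c_1,\ldots,b_m,c_m}) - w_3(K_{b_1,c_1,\ldots,b_m,0}) = -\frac{1}{4}\sum_{x=1}^{c_m}\left(a_2(K_{b_1,\ldots,b_m,x}) + a_2(K_{b_1,\ldots,b_m,x-1}) + \Bigl(\sum_{i=1}^m b_i\Bigr)^2\right).
\end{equation*}

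Next, I would identify the baseline $K_{b_1,c_1,\ldots,b_m,0}$ with $K_{b_1,c_1,\ldots,b_{m-1},c_{m-1}}$. Their rational-number invariants agree: in $[2b_1,2c_1,\ldots,2b_m,0]$ the trailing zero forces $1/(2b_m + 1/0)=0$, so the continued fraction collapses to $[2b_1,2c_1,\ldots,2b_{m-1},2c_{m-1}]$, and the two two-bridge knots coincide. Equivalently, at the diagrammatic level the $2c_m=0$ twist region of the Conway form in Figure \ref{ConwayForm} is empty, and the adjacent $2b_m$ twists then unwind by Reidemeister moves.

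To evaluate the remaining sum, I would control the $a_2$ values along the recursion using the crossing change formula (\ref{a2crossingchange}) at the same rightmost crossing used in Lemma \ref{w3RecursiveLemma}. There $K'\cup K''$ is a two-component unlink with $\lk(K',K'')=-\sum_{i=1}^m b_i$, which gives
\[ a_2(K_{b_1,\ldots,b_m,x}) - a_2(K_{b_1,\ldots,b_m,x-1}) = -\sum_{i=1}^m b_i, \]
and hence $a_2(K_{b_1,\ldots,b_m,x}) = a_2(K_{b_1,\ldots,b_{m-1},c_{m-1}}) - x\sum_{i=1}^m b_i$. Plugging this linear expression back into the telescoped sum and using the identity $\sum_{x=1}^{c_m}(2x-1)=c_m^2$ collapses the sum to exactly the right-hand side claimed in Lemma \ref{w3Lemma2}.

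The only point requiring genuine care is the baseline identification $K_{b_1,\ldots,b_m,0}=K_{b_1,\ldots,b_{m-1},c_{m-1}}$, which is really a statement about continued fractions rather than a knot-theoretic subtlety. Everything else amounts to bookkeeping: a telescoping sum combined with the fact that $a_2$ depends linearly on the twist parameter $x$ along this particular family.
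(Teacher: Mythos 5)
Your proposal is correct and follows essentially the same route as the paper: telescope Lemma \ref{w3RecursiveLemma} down to $x=0$, identify $K_{b_1,\ldots,b_m,0}$ with $K_{b_1,\ldots,b_{m-1},c_{m-1}}$, and substitute the linear dependence $a_2(K_{b_1,\ldots,b_m,x})=a_2(K_{b_1,\ldots,b_{m-1},c_{m-1}})-x\sum_{i=1}^m b_i$ (which the paper draws from Proposition \ref{Burde}, itself a consequence of the same crossing change formula you invoke). One small terminological slip: $K'\cup K''$ is a two-component link of unknots with $\lk(K',K'')=-\sum_{i=1}^m b_i$, hence not an \emph{unlink} unless that sum vanishes, but only the linking number enters the computation so nothing is affected.
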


\begin{proof}
We first prove the lemma for $c_m>0$.  We repeatedly apply Lemma \ref{w3RecursiveLemma} until $x$ is reduced to $0$.  Note that the knot $K_{b_1, c_1, \cdots, b_m, 0}$ can be isotoped to $K_{b_1, c_1, \cdots, b_{m-1}, c_{m-1}}$ by untwisting the far-right $b_m$ full twists. Therefore,

\begin{align*} & w_3 ( K_{b_1 , c_1, \cdots , b_m, c_m} ) - w_3 ( K_{b_1 , c_1, \cdots , b_{m-1}, c_{m-1}} )  \\
&= -\frac{1}{4} \left( a_2 ( K_{b_1 , c_1,  \cdots , b_m, c_m} )
+ 2 \sum_{x=1}^{c_{m} - 1 } a_2 ( K_{b_1 , c_1,  \cdots , b_m, x} )
+ a_2 ( K_{b_1 , c_1, \cdots , b_{m-1 }, c_{m-1} } ) + c_m (\sum_{i=1}^{m}b_i)^2 \right)
\end{align*}
Now, the lemma follows from substituting $$a_2(K_{b_1, c_1, \cdots, b_m, x})=a_2(K_{b_1, c_1, \cdots, b_{m-1}, c_{m-1}})-x\sum_{i=1}^m b_i,$$ which is an immediate corollary of Proposition \ref{Burde}.

The case when $c_m<0$ is proved analogously. 
\end{proof}
Finally, applying Lemma \ref{w3Lemma2} and induction on $m$, we obtain an explicit formula for $w_3$, and consequently also for $v_3$.

\begin{proposition} \label{v3formula}

$$ v_3 ( K_{b_1 , c_1, \cdots , b_m, c_m} )=-2w_3 ( K_{b_1 , c_1, \cdots , b_m, c_m} )=
\frac{1}{2}\left(\sum_{k=1}^m c_k (\sum_{i=1}^k b_i)^2 - \sum_{i=1}^m b_i (\sum_{k=i}^m c_k  )^2          \right)$$

\end{proposition}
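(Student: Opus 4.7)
The plan is to proceed by induction on $m$, using Lemma \ref{w3Lemma2} for the inductive step and Proposition \ref{Burde} to convert the resulting $a_2$-term into the partial-sum expressions that appear in the claimed closed form. For the base case $m=0$, the Conway form is the unknot, so both $v_3$ and the right-hand side are zero, and by identity \eqref{v3} it suffices to establish the formula for $w_3$ itself (with a factor of $-\tfrac12$).

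To set up notation for the inductive step, write $B_k=\sum_{i=1}^k b_i$ and $C_i^{(\ell)}=\sum_{k=i}^\ell c_k$, so that the proposed formula reads
\[
w_3(K_{b_1,c_1,\ldots,b_m,c_m}) \;=\; -\frac{1}{4}\Bigl(\sum_{k=1}^m c_k B_k^2 \;-\; \sum_{i=1}^m b_i (C_i^{(m)})^2\Bigr).
\]
Assuming this for index $m-1$, I would compute the difference between the conjectured right-hand sides at levels $m$ and $m-1$ directly. The only terms in $\sum_k c_k B_k^2$ that change are the new $k=m$ term $c_m B_m^2$, and the only terms in $\sum_i b_i (C_i^{(\ell)})^2$ that change are the new $i=m$ term $b_m c_m^2$ together with the contributions from $i\le m-1$, for which $C_i^{(m)}=C_i^{(m-1)}+c_m$ gives
\[
(C_i^{(m)})^2-(C_i^{(m-1)})^2 = 2c_m C_i^{(m-1)} + c_m^2.
\]

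Collecting terms, the difference at level $m$ minus level $m-1$ of the conjectured $-4w_3$ becomes
\[
c_m B_m^2 - b_m c_m^2 - 2c_m\!\!\sum_{i=1}^{m-1}\! b_i C_i^{(m-1)} - c_m^2\!\!\sum_{i=1}^{m-1}\! b_i,
\]
and the sum $\sum_{i=1}^{m-1} b_i\sum_{k=i}^{m-1} c_k$ is exactly $-a_2(K_{b_1,c_1,\ldots,b_{m-1},c_{m-1}})$ by Proposition \ref{Burde}, while $\sum_{i=1}^{m-1} b_i = B_m-b_m$. Substituting and simplifying cancels the $b_m c_m^2$ terms and reduces the expression to
\[
c_m B_m^2 + 2 c_m\, a_2(K_{b_1,c_1,\ldots,b_{m-1},c_{m-1}}) - c_m^2 B_m,
\]
which is precisely $4$ times the right-hand side of Lemma \ref{w3Lemma2} (up to the overall minus sign baked into the $-\tfrac14$ prefactor). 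This closes the induction, and equation \eqref{v3} converts the result from $w_3$ to $v_3$.

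The main obstacle is purely bookkeeping: correctly tracking how the partial sums $C_i^{(\ell)}$ change when $\ell$ is incremented (while $B_k$ does not), and making sure the $i=m$ boundary term $b_m c_m^2$ appearing from $-\sum_i b_i(C_i^{(m)})^2$ cancels the corresponding contribution from $c_m^2(B_m-b_m)$. Once these are lined up, the Burde identity does all the work of matching the remaining $a_2$-term in Lemma \ref{w3Lemma2}. No sign subtlety arises from the sign of $c_m$, since Lemma \ref{w3Lemma2} is already stated for arbitrary $c_m$.
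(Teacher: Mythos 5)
Your proposal is correct and follows essentially the same route as the paper: induction on $m$ with Lemma \ref{w3Lemma2} supplying the inductive step and Proposition \ref{Burde} converting the $a_2$-term. The only difference is that you explicitly carry out the telescoping algebra (including the cancellation of the $b_m c_m^2$ boundary terms) that the paper dismisses as ``tedious yet elementary'' and omits; your computation checks out.
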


\begin{proof}
We use induction on $m$.  For the base case $m=1$, Lemma \ref{w3Lemma2} readily implies that
$$w_3(K_{b_1, c_1})=-\frac{1}{4}(c_1b_1^2-c_1^2b_1),$$
so $K_{b_1, c_1}$ satisfies the formula.

Next we prove that if the formula holds for $K_{b_1 , c_1, \cdots , b_{m-1}, c_{m-1}}$, then it also holds for $K_{b_1 , c_1, \cdots , b_m, c_m}$.  It suffices to show that
\begin{align*} & -\frac{1}{4}\left(\sum_{k=1}^m c_k (\sum_{i=1}^k b_i)^2 - \sum_{i=1}^m b_i (\sum_{k=i}^m c_k  )^2          \right)
+\frac{1}{4}\left(\sum_{k=1}^{m-1} c_k (\sum_{i=1}^k b_i)^2 - \sum_{i=1}^{m-1} b_i (\sum_{k=i}^{m-1} c_k  )^2          \right)
  \\
&= - \frac{1}{4} \left( 2c_m \cdot a_2 ( K_{b_1 , c_1, \cdots , b_{m-1 }, c_{m-1} } ) - c_m^2 \sum_{i=1}^m b_i+ c_m (\sum_{i=1}^{m}b_i)^2 \right)
\end{align*}
where $$a_2(K_{b_1, c_1, \cdots, b_{m-1}, c_{m-1}})=-\sum_{i=1}^{m-1}\sum_{k=i}^{m-1} b_ic_k.$$
The above identity can be verified from tedious yet elementary algebra.  We omit the computation here.

\end{proof}


For the rest of the section, we apply Theorem \ref{Cormain} and Proposition \ref{v3formula} to study the cosmetic surgery problems for the two-bridge knots of genus $2$ and $3$, which correspond to the Conway form $K_{b_1, c_1, b_2, c_2}$ and $K_{b_1, c_1, b_2, c_2, b_3, c_3}$, respectively.  Note that the cosmetic surgery conjecture for genus one knot is already settled by Wang \cite{Wang2006}.

\begin{corollary}
If a genus $2$ two-bridge knot $K_{b_1, c_1, b_2, c_2}$ is not of the form $K_{x,y,-x-y,x}$ for some integers $x,y$, then it does not admit purely cosmetic surgeries.
\end{corollary}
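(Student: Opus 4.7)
The plan is to invoke Theorem~\ref{Cormain}: a knot with $v_2(K)\neq 0$ or $v_3(K)\neq 0$ admits no purely cosmetic surgery, so it suffices to prove the contrapositive. Specifically, I will show that if a genus $2$ two-bridge knot $K_{b_1,c_1,b_2,c_2}$ satisfies $v_2=0$ and $v_3=0$ simultaneously, then necessarily $(b_1,c_1,b_2,c_2)=(x,y,-x-y,x)$ for some integers $x,y$. The starting point is the two explicit expressions for $v_2=a_2$ (from Proposition~\ref{Burde}) and $v_3$ (from Proposition~\ref{v3formula}) applied to the case $m=2$.

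First, $v_2=0$ reads $b_1 c_1+b_1 c_2+b_2 c_2=0$; since all four Conway parameters are nonzero, I would solve this for $b_2=-b_1(c_1+c_2)/c_2$. Substituting into
$$v_3=\tfrac{1}{2}\bigl(c_1 b_1^2+c_2(b_1+b_2)^2-b_1(c_1+c_2)^2-b_2 c_2^2\bigr),$$
the term $c_2(b_1+b_2)^2$ collapses to $b_1^2 c_1^2/c_2$, and after collecting the remaining terms the full expression factors cleanly as
$$v_3=\frac{b_1 c_1 (c_1+c_2)(b_1-c_2)}{2 c_2}.$$

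Once this factorization is in hand the conclusion is immediate. Since $b_1$ and $c_1$ are nonzero integers, the vanishing of $v_3$ forces either $c_2=b_1$ or $c_1+c_2=0$. In the latter case, $v_2=0$ becomes $b_2 c_1=0$, contradicting the nonvanishing of the Conway parameters. Hence $c_2=b_1$, and then $v_2=0$ gives $b_2=-(b_1+c_1)$; setting $x=b_1$ and $y=c_1$ exhibits $K$ in the form $K_{x,y,-x-y,x}$, as desired.

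The main obstacle is purely computational: the algebraic simplification of $v_3$ after imposing $v_2=0$. A brute-force expansion of the closed-form formula for $v_3$ is somewhat unwieldy, but once the substitution $b_2=-b_1(c_1+c_2)/c_2$ is performed, the terms organize themselves around the factor $(b_1-c_2)$, and verifying the displayed factorization is elementary. One should also separately verify (by a quick direct computation with Propositions~\ref{Burde} and~\ref{v3formula}) that knots of the form $K_{x,y,-x-y,x}$ do indeed satisfy $v_2=v_3=0$, explaining why the excluded family in the corollary cannot be avoided by this method.
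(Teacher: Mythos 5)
Your proposal is correct and follows essentially the same route as the paper: invoke Theorem~\ref{Cormain}, impose $v_2=v_3=0$ using Propositions~\ref{Burde} and~\ref{v3formula}, and deduce $c_2=b_1$ and $b_2=-b_1-c_1$. The only difference is algebraic bookkeeping — the paper substitutes $c_2(b_1+b_2)=-b_1c_1$ and $b_1(c_1+c_2)=-b_2c_2$ term by term to reach $v_3=\tfrac{1}{2}b_2c_1(c_2-b_1)$ directly, whereas your elimination of $b_2$ introduces the extra factor $(c_1+c_2)$, which you correctly rule out using the nonvanishing of the Conway parameters.
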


\begin{proof}
Suppose there are purely cosmetic surgeries for the knot $K_{b_1, c_1, b_2, c_2}$.  Theorem \ref{Cormain} implies that
\begin{equation}\label{v2genus2}
 a_2 (K_{b_1, c_1, b_2, c_2})= - ( b_1 c_1 + b_1 c_2 + b_2 c_2)=0,
\end{equation}
and
\begin{equation}\label{v3genus2}
v_3 (K_{b_1, c_1, b_2, c_2})=
\frac{1}{2} \left( c_1 b_1^2 + c_2 ( b_1 + b_2 )^2 - b_1 ( c_1 + c_2 )^2 - b_2 c_2^2 \right)=0,
\end{equation}
where the formula for $a_2$ and $v_3$ follows from Proposition \ref{Burde} and Proposition \ref{v3formula}, respectively.  From Equation (\ref{v2genus2}), we see $c_2(b_1+b_2)=-b_1c_1$ and $b_1(c_1+c_2)=-b_2c_2$, which was then substituted into the second and the third terms of Equation (\ref{v3genus2}), and gives
$$v_3 (K_{b_1, c_1, b_2, c_2})=\frac{1}{2} \left( c_1 b_1^2 -b_1c_1( b_1 + b_2 ) +b_2c_2( c_1 + c_2 ) - b_2 c_2^2 \right)=\frac{1}{2}b_2c_1(c_2-b_1)=0.
$$
Hence, $b_1=c_2$.  Plugging this identity back to Equation (\ref{v2genus2}), we see $b_1+b_2+c_1=0$.  As a result, the two-bridge knot $K_{b_1, c_1, b_2, c_2}$ can be written as $K_{x,y,-x-y,x}$ for some integers $x$ and $y$.

\end{proof}

We can perform a similar computation for a genus $3$ two-bridge knot $K_{b_1, c_1, b_2, c_2, b_3, c_3}$. By Proposition \ref{v3formula},
$$v_3(K_{b_1, c_1, b_2, c_2, b_3, c_3})
=\frac{1}{2}(c_1b_1^2+c_2(b_1+b_2)^2+c_3(b_1+b_2+b_3)^2-b_1(c_1+c_2+c_3)^2
-b_2(c_2+c_3)^2-b_3c_3^2).$$
In particular, we see
$$v_3(K_{x,1,-x,x,1,-x})=-x\neq 0.$$
Consequently, Theorem \ref{Cormain} implies

\begin{corollary}

The family of two-bridge knots $K_{x,1,-x,x,1,-x}$ does not admit purely cosmetic surgeries.

\end{corollary}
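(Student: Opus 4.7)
The plan is straightforward: simply substitute the parameters $b_1=x$, $c_1=1$, $b_2=-x$, $c_2=x$, $b_3=1$, $c_3=-x$ into the explicit closed formula for $v_3$ given in Proposition \ref{v3formula}, and then invoke Theorem \ref{Cormain}. The work is essentially bookkeeping, so no substantial obstacle is expected.

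Concretely, I would first observe the convenient cancellations coming from the symmetry of the parameters, namely that $b_1+b_2=0$, $c_2+c_3=0$, $b_1+b_2+b_3=1$, and $c_1+c_2+c_3=1$. These four identities kill four of the six summands in the genus-$3$ formula
\[
v_3(K_{b_1,c_1,b_2,c_2,b_3,c_3})=\tfrac{1}{2}\Bigl(c_1b_1^2+c_2(b_1+b_2)^2+c_3(b_1+b_2+b_3)^2-b_1(c_1+c_2+c_3)^2-b_2(c_2+c_3)^2-b_3c_3^2\Bigr),
\]
reducing the computation to
\[
v_3(K_{x,1,-x,x,1,-x})=\tfrac{1}{2}\bigl(x^2+(-x)-x-x^2\bigr)=-x.
\]
This matches the value asserted in the paragraph preceding the corollary.

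Next, for any nonzero integer $x$ we have $v_3(K_{x,1,-x,x,1,-x})=-x\neq 0$. Theorem \ref{Cormain} then immediately rules out purely cosmetic surgeries on such a knot. For completeness one should remark that the case $x=0$ is degenerate (the Conway form collapses and the underlying knot is trivial), so the family of interest consists of the knots with $x\neq 0$, and the argument covers exactly these. No nontrivial obstacle arises; the only thing to watch for is correctly expanding the six squared sums in the $v_3$ formula, which the symmetric choice of parameters makes essentially automatic.
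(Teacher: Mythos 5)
Your proposal is correct and follows exactly the paper's argument: substitute into the genus-$3$ specialization of Proposition \ref{v3formula} to get $v_3(K_{x,1,-x,x,1,-x})=-x\neq 0$ (note $x\neq 0$ is automatic since the Conway form requires nonzero entries), then apply Theorem \ref{Cormain}. The arithmetic checks out (only a minor wording slip: two of the six summands vanish and two more simplify, rather than four vanishing), so there is nothing to add.
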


\begin{remark}
As explained in \cite{IchiharaSaito2016}, both $\Delta''_K(1)$ and $\tau(K)$ are $0$ for the knot $K_{x,1,-x,x,1,-x}$. Hence, purely cosmetic surgery could not be ruled out by previously known results from Theorem \ref{MainLemma}.

\end{remark}

\bigskip

\section{Examples of Whitehead doubles}
We are devoted to $D_+(K,n)$ in this section, where $D_+(K,n)$ denotes the satellite of $K$ for which the pattern is a positive-clasped twist knot with $n$ twists.  The knot $D_+(K,n)$ is called the \emph{positive $n$-twisted Whitehead double} of a knot $K$.  See Figure \ref{Whitehead} for an illustration.

\begin{figure}[H]
\centering
\includegraphics{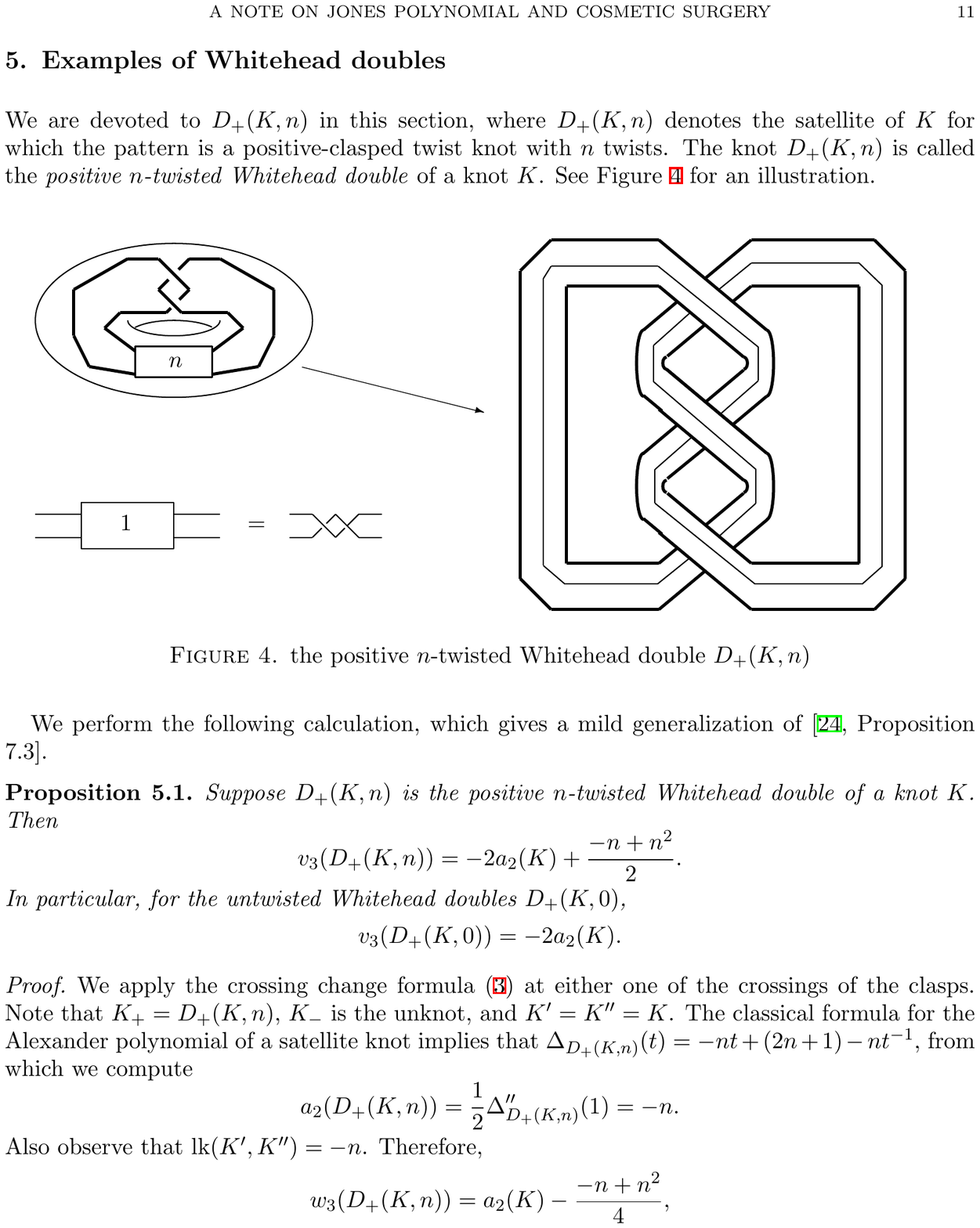}
\caption{the positive $n$-twisted Whitehead double $D_+(K,n)$}
\label{Whitehead}
\end{figure}

We perform the following calculation, which gives a mild generalization of \cite[Proposition 7.3]{Stoimenow2003}.

\begin{proposition} \label{v3Whitehead}
Suppose $D_+(K,n)$ is the positive $n$-twisted Whitehead double of a knot $K$.  Then
$$v_3(D_+(K, n))= -2a_2(K)+\frac{-n+n^2}{2} .$$
In particular, for the untwisted Whitehead doubles $D_+(K,0)$, $$v_3(D_+(K, 0))=-2a_2(K).$$

\end{proposition}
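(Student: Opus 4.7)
The plan is to prove the formula by induction on $|n|$, using the crossing change formula (\ref{w3Def}) for $w_3$ and the identity $v_3 = -2 w_3$ from (\ref{v3}).

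For the base case $n = 0$, the identity $v_3(D_+(K, 0)) = -2 a_2(K)$ is \cite[Proposition 7.3]{Stoimenow2003}. A self-contained derivation: apply (\ref{w3Def}) at a clasp crossing of $D_+(K, 0)$. The crossing change produces the unknot (after the evident Reidemeister II cancellation of the resulting mixed clasp), and the oriented smoothing at the clasp gives two parallel longitudinal copies of $K$ with linking number $0$ (both values can be confirmed via (\ref{a2crossingchange}) and the fact that $a_2(D_+(K, 0)) = 0$). Substituting these values into (\ref{w3Def}) yields $w_3(D_+(K, 0)) = a_2(K)$, and hence $v_3(D_+(K, 0)) = -2 a_2(K)$.

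For the inductive step, the plan is to establish the recursion
\[
v_3(D_+(K, n)) - v_3(D_+(K, n-1)) = n - 1
\]
for $n \geq 1$ (the case $n \leq -1$ is handled symmetrically), and then telescope from the base case to obtain $v_3(D_+(K, n)) = -2 a_2(K) + \sum_{k=1}^{n}(k-1) = -2 a_2(K) + \tfrac{n(n-1)}{2}$. To set up the recursion, pick a crossing in the twist region of $D_+(K, n)$ whose sign-change, followed by the evident Reidemeister II cancellation, produces $D_+(K, n-1)$. Apply (\ref{w3Def}), using the key fact that $a_2(D_+(K, m))$ depends only on $m$ (a direct consequence of the satellite Alexander polynomial formula $\Delta_{P(K)}(t) = \Delta_P(t)\Delta_K(1)$ applied to the winding-zero twist-knot pattern) to evaluate the $a_2(K_\pm)$ contributions. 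The oriented smoothing at the selected crossing decomposes the knot into a two-component link $J' \cup J''$; a diagrammatic analysis identifies the components and computes their linking number as a function of $n$. Substituting these data into (\ref{w3Def}) yields the desired recursion.

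The principal obstacle is the diagrammatic step: namely, pinning down the knot types of $J'$ and $J''$ and their mutual linking number as functions of $n$, so that the right-hand side of (\ref{w3Def}) evaluates to $-(n-1)/2$. Once this geometric data is correctly extracted, the telescoping calculation that produces the stated formula is routine.
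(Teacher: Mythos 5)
Your overall structure (base case at the clasp, induction over the twist region, telescoping) is workable, but the proof as written has a genuine gap: the entire content of the inductive step is precisely the ``diagrammatic step'' that you defer. The recursion $v_3(D_+(K,n))-v_3(D_+(K,n-1))=n-1$ is equivalent, via (\ref{w3Def}), to the claim that for the twist-region smoothing one has $a_2(J')+a_2(J'')=0$ and $\lk(J',J'')=\pm1$, together with the correct assignment of which of $D_+(K,n)$, $D_+(K,n-1)$ plays the role of $K_+$; none of this is established. It is also easy to get wrong: a natural first guess is that one component of the smoothing is a copy of $K$ (since it contains the strands following the companion), which would inject an unwanted $a_2(K)/2$ into the recursion and break the telescoping. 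The correct statement is that \emph{both} components are unknots --- one is the clasp capped off at the smoothing site, the other is the boundary of the embedded band following $K$ (a disk, so its boundary is unknotted) --- and $\lk(J',J'')=\pm1$ is then forced by (\ref{a2crossingchange}) and $a_2(D_+(K,m))=-m$. Until you pin this down, together with the crossing sign, the inductive step is an assertion rather than a proof. Your base case, by contrast, is complete and correct.

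You should also know that the paper dispenses with the induction entirely: it applies (\ref{w3Def}) once, at a clasp crossing of $D_+(K,n)$ for general $n$. There $K_-$ is the unknot and the smoothing visibly yields $K'=K''=K$ with $\lk(K',K'')=-n$ (the framing coming from the $n$ twists), while $a_2(D_+(K,n))=-n$ is read off from the satellite formula for the Alexander polynomial. Substituting gives $w_3(D_+(K,n))=a_2(K)-\frac{-n+n^2}{4}$ in one step. This is both shorter and safer, since the only geometric input --- the identification of the clasp smoothing --- is the same one you already use for your base case; if you complete your argument, consider absorbing the general $n$ into that single computation rather than running the induction.
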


\begin{proof}
We apply the crossing change formula (\ref{w3Def}) at either one of the crossings of the clasps.  Note that $K_+=D_+(K,n)$, $K_-$ is the unknot, and $K'=K''=K$.
The classical formula for the Alexander polynomial of a satellite knot implies that $\Delta_{D_+(K, n)}(t)=-nt+(2n+1)-nt^{-1}$, from which we compute
$$a_2(D_+(K, n))=\frac{1}{2}\Delta_{D_+(K, n)}''(1)=-n.$$
Also observe that $\lk(K', K'')=-n$.  Therefore,
$$w_3(D_+(K, n))=a_2(K)-\frac{ -n+n^2}{4},$$
and so $$v_3(D_+(K, n))=-2a_2(K)+\frac{-n+n^2}{2}.$$

\end{proof}

Since the invariant $a_2(D_+(K, n))=-n$, the Whitehead double $D_+(K, n)$ does not admit purely cosmetic surgeries if $n\neq 0$.  When $n=0$,  Proposition \ref{v3Whitehead} gives $v_3(D_+(K, n))=-2a_2(K)$.  Hence, Theorem \ref{Cormain} immediately implies the following corollary.

\begin{corollary}
There is no purely cosmetic surgery for the positive $n$-twisted Whitehead double $D_+(K, n)$ for $n\neq 0$.  Moreover, if $a_2(K)\neq 0$, then there is no purely cosmetic surgery for the untwisted Whitehead double $D_+(K, 0)$ .
\end{corollary}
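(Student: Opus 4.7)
The plan is to simply apply Theorem \ref{Cormain} to each of the two cases, using the finite type invariant computations already recorded in Proposition \ref{v3Whitehead}. By Theorem \ref{Cormain}, it suffices to verify that either $v_2(D_+(K,n)) \neq 0$ or $v_3(D_+(K,n)) \neq 0$ in the respective cases.

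For the case $n \neq 0$, I would observe that the proof of Proposition \ref{v3Whitehead} already established $a_2(D_+(K,n)) = -n$ via the Alexander polynomial computation $\Delta_{D_+(K,n)}(t) = -nt + (2n+1) - nt^{-1}$. Combined with the normalization $v_2 = a_2$ from equation (\ref{v2}), this gives $v_2(D_+(K,n)) = -n \neq 0$, and Theorem \ref{Cormain} rules out purely cosmetic surgeries at once.

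For the remaining case $n = 0$, the invariant $v_2$ vanishes, so I would instead rely on $v_3$. Proposition \ref{v3Whitehead} directly gives $v_3(D_+(K,0)) = -2 a_2(K)$, so the hypothesis $a_2(K) \neq 0$ forces $v_3(D_+(K,0)) \neq 0$, and Theorem \ref{Cormain} again concludes the argument.

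There is essentially no obstacle here: the corollary is a clean packaging of Proposition \ref{v3Whitehead} plus the reformulated criterion of Theorem \ref{Cormain}. The only thing to keep straight is which invariant does the work in which range of $n$: $v_2$ handles all twisted doubles at once because the linking of the clasp contributes nontrivially to $a_2$, while for the untwisted double one must drop down to the order-$3$ invariant, where the companion's $a_2(K)$ reappears as the obstruction.
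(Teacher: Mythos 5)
Your proposal is correct and matches the paper's argument exactly: the case $n\neq 0$ is handled by $v_2(D_+(K,n))=a_2(D_+(K,n))=-n\neq 0$, and the case $n=0$ by $v_3(D_+(K,0))=-2a_2(K)\neq 0$, both feeding into Theorem \ref{Cormain}. No gaps.
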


\begin{remark}
Note that $a_2(D_+(K,0))=0$.  Hedden shows $\tau(D_+(K,0))=0$ when $\tau(K) \leq 0$ \cite[Theorem 1.5]{Hedden2007}.
\end{remark}

\end{document}